\date{}
\renewcommand{\uppercasenonmath}[1]{}
\numberwithin{equation}{section} \theoremstyle{plain}
\newtheorem{lem}{Lemma}[section]
\newtheorem{cor}[lem]{Corollary}
\newtheorem{prop}[lem]{Proposition}
\newtheorem{thm}[lem]{Theorem}
\newtheorem{definition}[lem]{Definition}
\newtheorem{Ex}[lem]{Example}
\newtheorem{Quest}[lem]{Question}
\newtheorem{Property}[lem]{Property}
\newtheorem{Properties}[lem]{Properties}
\newtheorem{Subprops}{}[lem]
\newtheorem{Para}[lem]{}
\newtheorem{rem}[lem]{Remark}
\newenvironment{df}{\begin{definition}\rm}{\end{definition}}
\newenvironment{ex}{\begin{Ex}\rm}{\end{Ex}}
\newtheorem*{ack*}{ACKNOWLEDGEMENTS}
\newcommand{\pf}{\noindent\begin {proof}}
\newcommand{\epf}{\end{proof}}
\newcommand{\X}{\mathcal{X}}
\newcommand{\W}{\mathcal{W}}
\newcommand{\C}{\mathcal{C}}
\newcommand{\E}{\mathbb{E}}
\begin{document}
\begin{center}
{\large  \bf  A new method to construct model structures from
left Frobenius pairs in extriangulated categories}

\vspace{0.5cm}  Yajun Ma, Haiyu Liu, Yuxian Geng\footnote{Corresponding author. }

\end{center}

\bigskip
\centerline { \bf  Abstract}
\medskip

\leftskip10truemm \rightskip10truemm \noindent Extriangulated categories were introduced by Nakaoka and Palu as a simultaneous generalization of exact categories and triangulated categories. In this paper, we first introduce the concept of left Frobenius pairs on an extriangulated category $\C$, and then establish a bijective correspondence between Frobenius pairs and certain cotorsion pairs in $\C$.  As an application, some new admissible model structures are established from left Frobenius pairs under certain conditions, which generalizes a result of Hu et al. (J. Algebra 551 (2020) 23-60).    \\[2mm]
{\bf Keywords:} Extriangulated category; Left Frobenius pair; Cotorsion pair; Model structure.\\
{\bf 2010 Mathematics Subject Classification:} 18E30; 18E10; 16E05; 18G20; 18G35.

\leftskip0truemm \rightskip0truemm
%\bigskip
\section { \bf Introduction}
%\bigskip

The notion of extriangulated categories, whose extriangulated structures are given by $\E$-triangles with some axioms, was introduced by Nakaoka and
Palu in \cite{NP} as a simultaneous generalization of exact categories and triangulated categories. They gave a bijective correspondence between Hovey twin cotorsion pairs and admissible model structures which unified the work of Hovey, Gillespie and Yang (see \cite{HCc,Gillespie,Yang}). Exact categories and extension closed subcategories of an
extriangulated category are extriangulated categories, while there exist some other examples of extriangulated categories which are neither exact nor triangulated, see \cite{NP,ZZ,HZZ}.

Motivated by the ideas of projective covers and injective envelopes, Auslander and Buchweitz analyzed the framework in which the theory of maximal Cohen-Macaulay approximation can be developed. They systematically established their theory in abelian categories, which is known as Auslander-Buchweitz approximation theory. Up to now, Auslander-Buchweitz approximation theory has many important applications, see for example \cite{MSSS1,MSSS2,BMPS,DLWW}.
In particular, Becerril and coauthers \cite{BMPS}  have revisited
Auslander-Buchweitz approximation Theory. From the notions of relative generators and cogenerators in approximation theory, they introduced the concept of left Frobenius pairs in an abelian category, established a bijective correspondence between Frobenius pairs and relative cotorsion pairs, and showed how to construct an exact model structure from a Frobenius
pair, as a result of Hovey-Gillespie correspondence applied to two complete cotorsion pairs on
an exact category (see \cite{HCc,Gillespie}).

The aim of this paper is to introduce the concept of left Frobenius pairs in an extriangulated category and give a method to construct more admissible model structures from left Frobenius pairs. For this purpose, we need establish a bijective correspondence between left Frobenius pairs and cotorsion pairs in an extriangulated category under certain conditions.

The paper is organized as follows. In Section 2, we recall the definition of an extriangulated category and outline some basic properties that will be used later. In Section 3, we first introduce the concept of left Frobenius pairs (see Definition \ref{df:Frobenius-pair}), and then study relative resolution dimension and thick subcategories with respect to a given left Frobenius pair. As a result, we give a bijective correspondence between left Frobenius pairs and cotorsion pairs in an extriangulated category under certain conditions (see Theorem \ref{thm}).
In Section 4, we give a method to construct the admissible model structure from a strong left Frobenius pair under certain conditions (see Theorem \ref{thm6}), which generalizes a main result of Hu et al. in \cite{HZZ}. This is based on the bijective correspondence established in Section 3.

\section{\bf Preliminaries}
Throughout this paper, $\C$ denotes an additive category, by the term $``subcategory"$ we always mean a full additive subcategory of an additive category closed under isomorphisms and direct summands.
 We denote by ${\mathcal{\C}}(A, B)$ the set of morphisms from $A$ to $B$ in $\C$.

Let $\X$ and $\mathcal{Y}$ be two subcategories of $\C$, a morphism $f: X\rightarrow C$ in $\C$ is said to be an $\X$-precover of $C$ if $X\in\X$ and ${\C}(X', f): {\C}(X', X)\rightarrow {\C}(X', C)$ is surjective for all $X'\in\X$. If any $C\in \mathcal{Y}$ admits an $\X$-precover, then $\X$ is called a precovering class in $\mathcal{Y}$.
 By dualizing the definitions above, we get notions of an $\X$-preenvelope of $C$ and a preenveloping class in $\mathcal{Y}$. For more details, we refer to \cite{EJ2}.

Let us briefly recall some definitions and basic properties of extriangulated categories from \cite{NP}. We omit some details here, but the reader can find them in \cite{NP}.

Assume that $\mathbb{E}: \mathcal{C}^{\rm op}\times \mathcal{C}\rightarrow {\rm Ab}$ is an additive bifunctor, where $\mathcal{C}$ is an additive category and ${\rm Ab}$ is the category of abelian groups. For any objects $A, C\in\mathcal{C}$, an element $\delta\in \mathbb{E}(C,A)$ is called an $\mathbb{E}$-extension.
Let $\mathfrak{s}$ be a correspondence which associates an equivalence class $\mathfrak{s}(\delta)=\xymatrix@C=0.8cm{[A\ar[r]^x
 &B\ar[r]^y&C]}$ to any $\mathbb{E}$-extension $\delta\in\mathbb{E}(C, A)$. This $\mathfrak{s}$ is called a {\it realization} of $\mathbb{E}$, if it makes the diagram in \cite[Definition 2.9]{NP} commutative.
 A triplet $(\mathcal{C}, \mathbb{E}, \mathfrak{s})$ is called an {\it extriangulated category} if it satisfies the following conditions.
\begin{enumerate}
\item $\mathbb{E}\colon\mathcal{C}^{\rm op}\times \mathcal{C}\rightarrow \rm{Ab}$ is an additive bifunctor.

\item $\mathfrak{s}$ is an additive realization of $\mathbb{E}$.

\item $\mathbb{E}$ and $\mathfrak{s}$ satisfy certain axioms in \cite[Definition 2.12]{NP}.
\end{enumerate}

In particular, we recall the following axioms which will be used later:

\emph{(ET4)} Let $\delta\in\mathbb{E}(D,A)$ and $\delta'\in\mathbb{E}(F, B)$ be $\mathbb{E}$-extensions realized by
 \begin{center} $\xymatrix{A\ar[r]^f&B\ar[r]^{f'}&D}$ and $\xymatrix{B\ar[r]^g&C\ar[r]^{g'}&F}$\end{center}
 respectively. Then there exists an object $E\in\mathcal{C}$, a commutative diagram
 $$\xymatrix{A\ar[r]^f\ar@{=}[d]&B\ar[r]^{f'}\ar[d]_g&D\ar[d]^d\\
A\ar[r]^h&C\ar[r]^{h'}\ar[d]_{g'}&E\ar[d]^e\\
&F\ar@{=}[r]&F}$$
in $\mathcal{C}$, and an $\mathbb{E}$-extension $\delta^{''}\in\mathbb{E}(E, A)$ realized by $\xymatrix{A\ar[r]^h&C\ar[r]^{h'}&E,}$
which satisfy the following compatibilities.

\begin{enumerate}
\item[(i)] $\xymatrix{D\ar[r]^d&E\ar[r]^{e}&F}$ realizes $f'_*\delta'$,

\item[(ii)] $d^*\delta^{''}=\delta$,

\item[(iii)] $f_*\delta^{''}=e^*\delta'$.
\end{enumerate}

\emph{(ET4)$^{\rm op}$} Dual of (ET4).

\begin{rem}
Note that both exact categories and triangulated categories are extriangulated categories $($see \cite[Example 2.13]{NP}$)$ and extension closed subcategories of extriangulated categories are
again extriangulated $($see \cite[Remark 2.18]{NP}$)$. Moreover, there exist extriangulated categories which
are neither exact categories nor triangulated categories $($see \cite[Proposition 3.30]{NP}, \cite[Example 4.14]{ZZ} and \cite[Remark 3.3]{HZZ}$)$.
\end{rem}

%Assume that $(\mathcal{C}, \mathbb{E}, \mathfrak{s})$ is an extriangulated category. By Yoneda's Lemma, any $\mathbb{E}$-extension $\delta\in \mathbb{E}(C, A)$ induces  natural transformations
% $\delta_\sharp: \mathcal{C}(-, C)\Rightarrow \mathbb{E}(-, A)$ $~~ ~$and$~~~$ $\delta^\sharp: \mathcal{C}(A, -)\Rightarrow \mathbb{E}(C, -)$.
%For any $X\in\mathcal{C}$, these $(\delta_\sharp)_X$ and $\delta^\sharp_X$ are given as follows:
%
%(1) $(\delta_\sharp)_X: \mathcal{C}(X, C)\Rightarrow \mathbb{E}(X, A); ~f\mapsto f^*\delta.$
%
%(2) $\delta^\sharp_X: \mathcal{C}(A, X)\Rightarrow \mathbb{E}(C, X); ~g\mapsto g_*\delta.$
\begin{lem}{\rm \cite[Corollary 3.12]{NP}}  Let $(\mathcal{C}, \mathbb{E}, \mathfrak{s})$ be an extriangulated category and $$\xymatrix@C=2em{A\ar[r]^{x}&B\ar[r]^{y}&C\ar@{-->}[r]^{\delta}&}$$ an $\mathbb{E}$-triangle. Then we have the following long exact sequences:

$\xymatrix@C=1cm{\mathcal{C}(C, -)\ar[r]^{\mathcal{C}(y, -)}&\mathcal{C}(B, -)\ar[r]^{\mathcal{C}(x, -)}&\mathcal{C}(A, -)\ar[r]^{\delta^\sharp}&\mathbb{E}(C, -)\ar[r]^{\mathbb{E}(y, -)}&\mathbb{E}(B, -)\ar[r]^{\mathbb{E}(x, -)}&\mathbb{E}(A, -);}$

$\xymatrix@C=1cm{\mathcal{C}(-, A)\ar[r]^{\mathcal{C}(-, x)}&\mathcal{C}(-, B)\ar[r]^{\mathcal{C}(-, y)}&\mathcal{C}(-, C)\ar[r]^{\delta_\sharp}&\mathbb{E}(-, A)\ar[r]^{\mathbb{E}(-, x)}&\mathbb{E}(-, B)\ar[r]^{\mathbb{E}(-, y)}&\mathbb{E}(-, C),}$

\noindent where natural transformations $\delta_\sharp$ and $\delta^\sharp$ are induced by $\mathbb{E}$-extension $\delta\in \mathbb{E}(C, A)$ via Yoneda's lemma.

\end{lem}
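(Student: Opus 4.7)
The plan is to exploit the axiom (ET3) of extriangulated categories, together with the bifunctoriality of $\mathbb{E}$, and to reduce the two long exact sequences to one another by duality. Since the second sequence is obtained from the first by passing to the opposite extriangulated category $(\mathcal{C}^{\mathrm{op}},\mathbb{E}^{\mathrm{op}},\mathfrak{s}^{\mathrm{op}})$, I would only prove the first sequence in detail and invoke duality for the second.

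The first three terms involve only morphisms and are handled via the weak-cokernel property of an $\mathbb{E}$-triangle. A direct consequence of the definition of a realization is that $y\circ x=0$, hence $\mathcal{C}(x,-)\circ\mathcal{C}(y,-)=0$. For exactness at $\mathcal{C}(B,-)$, given $f\colon B\to X$ with $f\circ x=0$, I would apply (ET3) to the morphism of $\mathbb{E}$-triangles from the one realizing $\delta$ to the split triangle $X\xrightarrow{\mathrm{id}}X\to 0\dashrightarrow$, using the pair $(f,0)$ on the first two objects; the completion of this square produces the desired factorization $g\colon C\to X$ of $f$ through $y$.

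Next I introduce the connecting map $\delta^{\sharp}\colon\mathcal{C}(A,-)\to\mathbb{E}(C,-)$ by $\delta^{\sharp}(f):=f_{*}\delta$, using the bifunctor structure of $\mathbb{E}$ via Yoneda. Since $\mathfrak{s}(\delta)$ has left-hand morphism $x$, one has $x_{*}\delta=0$, whence $\delta^{\sharp}\circ\mathcal{C}(x,-)=0$; similarly $y^{*}\delta=0$ yields $\mathbb{E}(y,-)\circ\delta^{\sharp}=0$. Exactness at $\mathcal{C}(A,-)$ then reduces to showing that $f_{*}\delta=0$ implies that $f\colon A\to X$ factors through $x$: the vanishing $f_{*}\delta=0$ means that the pushout $\mathbb{E}$-triangle $X\to Y\to C\dashrightarrow$ along $f$ is split, and comparing this split triangle with the original realization of $\delta$ through (ET3) extracts the required factorization through $x$.

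The two remaining exactness statements, at $\mathbb{E}(C,-)$ and at $\mathbb{E}(B,-)$, are the main technical obstacles, since they require lifting an $\mathbb{E}$-extension rather than merely a morphism. For exactness at $\mathbb{E}(C,-)$, given $\eta\in\mathbb{E}(C,X)$ with $y^{*}\eta=0$, I would realize $\eta$ by some $\mathbb{E}$-triangle $X\to Y\to C\dashrightarrow$; the vanishing of $y^{*}\eta$ forces its pullback along $y$ to split, and a further application of (ET3) produces a morphism $f\colon A\to X$ with $f_{*}\delta=\eta$. For exactness at $\mathbb{E}(B,-)$, I would apply (ET4) to glue the $\mathbb{E}$-triangle realizing $\eta$ with the one realizing $\delta$, and then read off the required $\xi$ with $y^{*}\xi=\eta$ from the resulting commutative $3\times 3$ diagram. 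These two steps are precisely where the full strength of the extriangulated axioms enters; everything else is formal bifunctoriality.
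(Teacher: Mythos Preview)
The paper does not give its own proof of this lemma; it is stated with a bare citation to \cite[Corollary~3.12]{NP} and used as a black box. So there is no ``paper's proof'' to compare against beyond the original Nakaoka--Palu argument.

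Your sketch follows essentially the same route as \cite{NP}: the Hom--part via weak (co)kernel properties coming from (ET3)/(ET3)$^{\mathrm{op}}$, the connecting map via Yoneda and bifunctoriality, and the final exactness at $\mathbb{E}(B,-)$ via (ET4). Two small slips are worth flagging. For exactness at $\mathcal{C}(B,-)$, the split $\mathbb{E}$-triangle you should map \emph{into} is $0\to X\xrightarrow{\mathrm{id}}X$, with the pair $(0\colon A\to 0,\; f\colon B\to X)$ on the first two objects; mapping into $X\xrightarrow{\mathrm{id}}X\to 0$ as you wrote only yields a map $C\to 0$, which is useless. For exactness at $\mathbb{E}(C,-)$, once the pullback of $\eta$ along $y$ splits you obtain a lift $s\colon B\to Y$ over $y$, and the axiom you then invoke to produce $f\colon A\to X$ with $f_{*}\delta=\eta$ is (ET3)$^{\mathrm{op}}$ (completing a commutative \emph{right} square), not (ET3). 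With these corrections, your outline is the standard one.
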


Let $\mathcal{C},\mathbb{E}$ be as above.

$\bullet$ A sequence $\xymatrix@C=0.6cm{A \ar[r]^{x} & B \ar[r]^{y} & C }$ is called a \emph{conflation} if it realizes some $\mathbb{E}$-extension $\delta\in\mathbb{E}(C, A)$.
In this case, $x$ is called an {\it inflation}, $y$ is called a {\it deflation}, and we write it as $$\xymatrix{A\ar[r]^x&B\ar[r]^{y}&C\ar@{-->}[r]^{\delta}&.}$$
We usually do not write this ``$\delta$" if it is not used in the argument.

$\bullet$ An object $P\in \mathcal{C}$ is called projective if for any $\E$-triangle $\xymatrix{A\ar[r]^{x}&B\ar[r]^{y}&C\ar@{-->}[r]^{\delta}&}$ and any morphism $c\in \mathcal{C}(P, C)$, there exists $b\in \mathcal{C}(P, B)$ satisfying $y\circ b=c$. Injective objects are defined dually. We denote the subcategory consisting of projective (resp., injective)~objects in $\C$ by $Proj(\C)$~(resp., $Inj(\C)$).

$\bullet$ We say $\mathcal{C}$ has enough projectives (resp., enough injectives) if for any object $C\in\C$~(resp., $A\in \mathcal{C}$), there exists an $\E$-triangle
$\xymatrix{A\ar[r]^{x}&P\ar[r]^{y}&C\ar@{-->}[r]^{\delta}&}$ ($\xymatrix{A\ar[r]^{x}&I\ar[r]^{y}&C\ar@{-->}[r]^{\delta}&})$ satisfying $P\in Proj(\mathcal{C})$~$(\mathrm{resp}., I\in Inj(\mathcal{C})$). In this case, $A$ is called the syzygy of $C$ (resp., $C$ is called the cosyzygy of $A$) and is denoted by $\Omega(C)$~$(\mathrm{resp}.,\Sigma(A)$).

\begin{rem} \rm{(1) If $(\mathcal{C}, \mathbb{E}, \mathfrak{s})$ is an exact category, then the definitions of enough projectives and enough injectives agree with the usual definitions.

(2) If $(\mathcal{C}, \mathbb{E}, \mathfrak{s})$ is a triangulated category, then $Proj(\mathcal{C})$ and $Inj(\mathcal{C})$ consist of zero objects.
}\end{rem}

For a subcategory $\mathcal{B}\subseteq \mathcal{C}$, put $\Omega^{0}\mathcal{B}=\mathcal{B}$, and for $i>0$, we define $\Omega^{i}\mathcal{B}$ inductively to be the subcategory consisting of syzygies of objects in $\Omega^{i-1}\mathcal{B}$, i.e., $\Omega^{i}\mathcal{B}=\Omega(\Omega^{i-1}\mathcal{B}).$
We call $\Omega^{i}\mathcal{B}$ the $i$-th syzygy of $\mathcal{B}$. Dually we define the $i$-th cosyzygy $\Sigma^{i}\mathcal{B}$ by $\Sigma^{0}\mathcal{B}=\mathcal{B}$ and $\Sigma^{i}\mathcal{B}=\Sigma(\Sigma^{i-1}\mathcal{B})$ for $i>0.$

In \cite{LN} the authors defined higher extension groups in an extriangulated category having enough projectives and injectives as $\E^{i+1}(X, Y)\cong\E(X, \Sigma^{i}Y)\cong\E(\Omega^{i}X, Y)$ for $i\geq 0$, and they showed the following result:

\begin{lem} \cite[Proposition 5.2]{LN} Let $\xymatrix{A\ar[r]^{x}&B\ar[r]^{y}&C\ar@{-->}[r]^{\delta}&}$ be an $\E$-triangle. For any object $X\in \mathcal{B}$, there are long exact sequences
$$\xymatrix@C=0.5cm{\cdots\ar[r] &\E^{i}(X, A)\ar[r]^{x_{*}}&\E^{i}(X, B)\ar[r]^{y_{*}}&\E^{i}(X, C)\ar[r]&\E^{i+1}(X, A)\ar[r]^{x_{*}}&\E^{i+1}(X, B)\ar[r]^{y_{*}}&\cdots(i\geq1),}$$
$$\xymatrix@C=0.5cm{\cdots\ar[r] &\E^{i}(C, X)\ar[r]^{y^{*}}&\E^{i}(B, X)\ar[r]^{x^{*}}&\E^{i}(A, X)\ar[r]&\E^{i+1}(C, X)\ar[r]^{y^{*}}&\E^{i+1}(B, X)\ar[r]^{x^{*}}&\cdots(i\geq1).}$$
\end{lem}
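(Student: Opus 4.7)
The plan is to argue by induction on $i\geq 1$, using the defining identifications $\E^{i+1}(X,Y)\cong\E(X,\Sigma^i Y)\cong\E(\Omega^i X,Y)$ from \cite{LN}, with the 6-term sequence of the preceding Lemma (Corollary 3.12 of \cite{NP}) supplying both the base case and the splicing device. I describe the covariant sequence in $X$ in detail; the contravariant one is proved dually, interchanging projectives with injectives and syzygies with cosyzygies.

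Using that $\C$ has enough projectives, for each $j\geq 1$ I fix an $\E$-triangle $\Omega^j X\to P_{j-1}\to\Omega^{j-1}X$ with $P_{j-1}$ projective. Applying the contravariant 6-term sequence of Corollary 3.12 of \cite{NP} to this syzygy triangle and using $\E(P_{j-1},-)=0$, I obtain natural surjections $\mathcal{C}(\Omega^j X,-)\twoheadrightarrow\E(\Omega^{j-1}X,-)$ and, one degree further, natural isomorphisms $\E(\Omega^j X,-)\cong\E^2(\Omega^{j-1}X,-)$. Iterating yields the canonical dimension-shift $\E^{i+1}(X,-)\cong\E(\Omega^i X,-)$, which realizes the definition of $\E^{i+1}$ and is the key technical tool for the remainder of the argument.

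With this in hand, I evaluate the covariant 6-term sequence of Corollary 3.12 associated to the given $\E$-triangle $A\to B\to C$ with class $\delta\in\E(C,A)$ at the object $\Omega^{i-1}X$, obtaining the exact piece
$$\E(\Omega^{i-1}X,A)\to\E(\Omega^{i-1}X,B)\to\E(\Omega^{i-1}X,C),$$
which translates under the dimension-shift to $\E^i(X,A)\to\E^i(X,B)\to\E^i(X,C)$. I define the connecting morphism $\E^i(X,C)\to\E^{i+1}(X,A)$ as the composite
$$\E^i(X,C)\cong\E(\Omega^{i-1}X,C)\xrightarrow{\delta_\sharp}\E^2(\Omega^{i-1}X,A)\cong\E(\Omega^i X,A)\cong\E^{i+1}(X,A),$$
where $\delta_\sharp$ is the Yoneda-type connecting map attached to $\delta$ and produced by the preceding Lemma after pulling back along the syzygy triangle of $\Omega^{i-1}X$. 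Splicing successive 6-term pieces through these connecting maps delivers the required long exact sequence.

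The main technical obstacle is verifying exactness at the splice point $\E^i(X,C)\to\E^{i+1}(X,A)$ and showing that the connecting map is independent of the chosen syzygy triangle. In contrast to the triangulated situation, the $\E$-triangle $A\to B\to C$ cannot be rotated, so this verification must proceed through axiom (ET4) (and its dual (ET4)$^{\rm op}$): one builds the $3\times 3$ commutative diagram relating the syzygy triangle $\Omega^i X\to P_{i-1}\to\Omega^{i-1}X$ to the pullback of $\delta$ along representatives of classes in $\E^i(X,C)$, and then reads off the required exactness and naturality from the compatibilities (i)-(iii) of (ET4). Once this coherence is established, each exact segment of the spliced sequence is furnished by Corollary 3.12 of \cite{NP}, completing the induction; the second long exact sequence is then obtained by running the same argument contravariantly, replacing $\Omega$ by $\Sigma$ and invoking enough injectives.
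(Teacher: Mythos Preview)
The paper does not supply its own proof of this lemma: it is simply quoted from \cite[Proposition 5.2]{LN} and used as a black box. Your proposal therefore goes beyond what the paper does, reconstructing the argument rather than citing it.

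Your outline is the expected one and is essentially what underlies the result in \cite{LN}: dimension-shift via the syzygy triangles $\Omega^j X\to P_{j-1}\to\Omega^{j-1}X$ to identify $\E^{i+1}(X,-)\cong\E(\Omega^i X,-)$, then splice the six-term sequences of \cite[Corollary 3.12]{NP} evaluated at successive $\Omega^{i-1}X$. One point deserves more care than you give it. Your description of the connecting map as ``$\delta_\sharp:\E(\Omega^{i-1}X,C)\to\E^2(\Omega^{i-1}X,A)$ produced by the preceding Lemma'' is not quite right: the six-term sequence of Corollary 3.12 terminates at degree $1$, so there is no such $\delta_\sharp$ directly. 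The honest construction is to use the surjection $\mathcal{C}(\Omega^i X,C)\twoheadrightarrow\E(\Omega^{i-1}X,C)$ coming from the syzygy triangle, lift a class to $\mathcal{C}(\Omega^i X,C)$, and then apply the genuine connecting map $\mathcal{C}(\Omega^i X,C)\to\E(\Omega^i X,A)$ from the six-term sequence at $\Omega^i X$. Well-definedness (independence of the lift) follows because the kernel of the surjection is the image of $\mathcal{C}(P_{i-1},C)$, and $\E(P_{i-1},A)=0$; exactness at the splice point then drops out of the same diagram. You gesture at this with your (ET4) remark, but the argument is really just bookkeeping with the two interlocking six-term sequences rather than a substantive use of (ET4). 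With that correction, your sketch is sound.
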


An $\E$-triangle sequence in $\mathcal{C}$ \cite{ZZ1} is displayed as a sequence
$$\xymatrix{\cdots\ar[r]&X_{n+1}\ar[r]^{d_{n+1}}&X_{n}\ar[r]^{d_{n}}&X_{n-1}\ar[r]&\cdots&}$$
over $\mathcal{C}$ such that for any $n$, there are $\E$-triangles $\xymatrix{K_{n+1}\ar[r]^{g_{n}}&X_{n}\ar[r]^{f_{n}}&K_{n}\ar@{-->}[r]^{\delta^{n}}&}$ and the differential $d_{n}=g_{n-1}f_{n}$.

From now on to the end of the paper, we always suppose that $(\mathcal{C}, \mathbb{E}, \mathfrak{s})$ is an extriangulated categories with enough projectives and injectives.

\section{\bf Frobenius pairs and cotorsion pairs}
In this section, we introduce the concept of Frobenius pairs and show that it has very nice homological properties, which are necessary to construct cotorsion pairs from Frobenius pairs. At first, we need introduce the following definitions.

\begin{df} Let $\mathcal{X}$ be a subcategory of $\mathcal{C}$.
\begin{enumerate}
\item For any non-negative integer $n$, we denote by $\widecheck{\X_{n}}$ (resp., $\widehat{\X_{n}}$) the class of objects $C\in \C$ such that there exists an $\E$-triangle sequence ~\begin{center}$C\rightarrow X_{0}\rightarrow\cdots\rightarrow X_{n-1}\rightarrow X_{n}$ (resp., $X_{n}\rightarrow X_{n-1}\rightarrow\cdots\rightarrow X_{0}\rightarrow C$)
    \end{center} with each $X_{i}\in \mathcal{X}$. Moreover, we set $~\widecheck{\X}=\bigcup\limits_{n=0}^\infty \widecheck{\X_{n}}$, $\widehat{\X}=\bigcup\limits_{n=0}^\infty \widehat{{\X_n}}$.

\item For any $C\in \mathcal{C}$, the \emph{$\mathcal{X}$-resolution dimension} of $C$ is defined as
\begin{center}
resdim$_{\mathcal{X}}(C)$:=min$\{n\in\mathbb{N}:C\in\widehat{\mathcal{X}_{n}}\}$.
\end{center}
If $C\not\in\widehat{\X_{n}}$ for any $n\in \mathbb{N}$, then resdim$_{\X}(C)=\infty$.
\end{enumerate}
\end{df}

%Dually, we also have the $\mathcal{X}$-coresolution dimension of $C$ denoted by coresdim$_{\mathcal{X}}(C)$.

For a subcategory $\mathcal{X}$ of $\mathcal{C}$, define $\mathcal{X}^{\perp}=\{Y\in\mathcal{C}|\E^{i}(X, Y)=0 \mathrm{~for~all~} i\geq 1, \mathrm{~and~all~} X \in \mathcal{X}\}$. Similarly, we can define $^{\perp}\mathcal{X}.$

\begin{df}
Let $\mathcal{X}$ and $\mathcal{W}$ be two subcategories of $\mathcal{C}$. We say that

(1) $\mathcal{W}$ is a \emph{cogenerator} for $\mathcal{X}$, if $\mathcal{W}\subseteq\mathcal{X}$ and for each object $X\in \mathcal{X}$, there exists an $\E$-triangle $\xymatrix@C=0.6cm{X\ar[r]^{}&W\ar[r]^{}&X'\ar@{-->}[r]^{\delta}&}$ with $W\in \W$ and $X^{'}\in \X$. The notion of a \emph{generator} is defined dually.

(2) $\mathcal{W}$ is \emph{$\mathcal{X}$-injective} if $\mathcal{W}\subseteq\mathcal{X}^{\perp}$. The notion of an \emph{$\mathcal{X}$-projective} subcategory is defined dually.

(3) $\mathcal{W}$ is an \emph{$\mathcal{X}$-injective cogenerator} for $\X$ if $\mathcal{W}$ is a cogenerator for $\mathcal{X}$ and $\mathcal{W}\subseteq\mathcal{X}^{\perp}$. The notion of an \emph{$\mathcal{X}$-projective generator} for $\X$ is defined dually.

(4) $\X$ is a \emph{thick subcategory} if it is closed under direct summand and for any $\E$-triangle $$\xymatrix{A\ar[r]^{x}&B\ar[r]^{y}&C\ar@{-->}[r]^{\delta}&}$$ in $\C$ and two of $A,B,C$ are in $\C$, then so is the third.
\end{df}

%\begin{lem}\label{lem7}
%Suppose $\X$ is closed under extensions and $\W$ is a cogenerator for $\X$. Then for any $C\in\X$ and non-negative integer $n$, $C\in\widehat{\X_{n}}$ if and only if there exists an $\E$-triangle sequence
%$$W_{n}\rightarrow\cdots\rightarrow W_{2}\rightarrow W_1\rightarrow X_0\rightarrow C$$ with $X_{0}\in \X$ and $W_{i}\in\W$ for $1\leq i\leq n$.
%\end{lem}
%\begin{proof}
%The proof is dual to that of \cite[Lemma 3.5]{MDZH}.
%\end{proof}

The following theorem unifies some results of \cite{AB} and \cite{MSSS1}. It shows that any object in $\widehat{\X}$ admits two $\E$-triangles: one giving rise to an $\X$-precover and the other to a $\widehat{\W}$-preenvelope.
\begin{thm}\label{thm2}
Suppose $\X$ is closed under extensions and $\W$ is a cogenerator for $\X$. Consider the following conditions:

(1) $C$ is in $\widehat{\X_n}.$

(2) There exists an $\E$-triangle
$\xymatrix{Y_{C}\ar[r]^{}&X_{C}\ar[r]^{\varphi_{C}}&C\ar@{-->}[r]^{\delta}&}$
with $X_{C}\in \X$ and $Y_{C}\in \widehat{\W_{n-1}}.$

(3) There exists an $\E$-triangle
$\xymatrix{C\ar[r]^{\psi^{C}}&Y^{C}\ar[r]^{}&X^{C}\ar@{-->}[r]^{\theta}&}$
with $X^{C}\in \X$ and $Y^{C}\in \widehat{\W_{n}}.$

Then, $(1)\Leftrightarrow(2)\Rightarrow(3)$. If $\X$ is also closed under CoCone of deflations, then $(3)\Rightarrow(2)$, and hence all three conditions are equivalent. If $\W$ is $\X$-injective, then $\varphi_{C}$ is an $\X$-precover of $C$ and $\psi^{C}$ is a $\widehat{\W}$-preenvelope of $C$.

\end{thm}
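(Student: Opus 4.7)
The strategy is to mimic the classical Auslander--Buchweitz approximation argument in the extriangulated setting, with the octahedral axioms (ET4) and (ET4)$^{\rm op}$ serving as the principal assembly tools and the cogenerator property of $\W$ in $\X$ providing the $\E$-triangles that let us enlarge approximations by absorbing terms from $\W$. The two nontrivial constructive implications are (1) $\Rightarrow$ (2) and (3) $\Rightarrow$ (2); the remaining implications either follow by direct splicing or repeat the same octahedral move with different input triangles.

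For (1) $\Rightarrow$ (2), I would induct on $n$. The case $n = 0$ is handled by the trivial $\E$-triangle $0 \to C \to C$ with $X_C=C$, so suppose $n \geq 1$. Then $C \in \widehat{\X_n}$ supplies an $\E$-triangle $K \to X_0 \to C$ with $X_0 \in \X$ and $K \in \widehat{\X_{n-1}}$, and the induction hypothesis gives $Y_K \to X_K \to K$ with $X_K \in \X$ and $Y_K \in \widehat{\W_{n-2}}$; the cogenerator hypothesis then supplies $X_K \to W_K \to X'_K$ with $W_K \in \W$ and $X'_K \in \X$. These last two triangles share $X_K$ as the middle of one and the source of the other, so (ET4) delivers an object $L$ together with $\E$-triangles $Y_K \to W_K \to L$ and $K \to L \to X'_K$; splicing the $\W$-resolution of $Y_K$ onto the former yields $L \in \widehat{\W_{n-1}}$. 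Now form the pushout of the inflations $K \to X_0$ and $K \to L$, a construction available in any extriangulated category by \cite{NP}; it produces $X_C$ fitting in $\E$-triangles $L \to X_C \to C$ and $X_0 \to X_C \to X'_K$. Extension-closure of $\X$ forces $X_C \in \X$, and the first of these is precisely (2).

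The implication (2) $\Rightarrow$ (1) is immediate: $\W \subseteq \X$ gives $\widehat{\W_{n-1}} \subseteq \widehat{\X_{n-1}}$, so an $\X$-resolution of $Y_C$ splices onto $Y_C \to X_C \to C$ to give one of $C$ of length $\leq n$. For (2) $\Rightarrow$ (3), apply the cogenerator property to $X_C$ to obtain $X_C \to W \to X'$ and feed this together with $Y_C \to X_C \to C$ into (ET4); the output is $Y^C$ sitting in $\E$-triangles $Y_C \to W \to Y^C$ and $C \to Y^C \to X'$, and the same splicing trick gives $Y^C \in \widehat{\W_n}$, while $X^C := X' \in \X$. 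For (3) $\Rightarrow$ (2), under the additional hypothesis that $\X$ is closed under CoCone of deflations, peel off the first layer of a $\W$-resolution of $Y^C$ to get $K \to W_0 \to Y^C$ with $W_0 \in \W$ and $K \in \widehat{\W_{n-1}}$. The triangles $C \to Y^C \to X^C$ and $K \to W_0 \to Y^C$ share $Y^C$ as the middle of the first and the target of the second, which is the configuration dual to the one used in (1) $\Rightarrow$ (2); applying (ET4)$^{\rm op}$ yields $X_C$ with $\E$-triangles $X_C \to W_0 \to X^C$ and $K \to X_C \to C$, and the CoCone-closure hypothesis applied to the first forces $X_C \in \X$, so the second realizes (2) with $Y_C := K$.

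Finally, when $\W$ is $\X$-injective, a routine induction on resolution length using the long exact sequences recalled in Section 2 yields $\E^i(\X, \widehat{\W}) = 0$ for all $i \geq 1$. Applying $\C(X', -)$ with $X' \in \X$ to the $\E$-triangle in (2) gives surjectivity of $\C(X', X_C) \to \C(X', C)$, making $\varphi_C$ an $\X$-precover of $C$; applying $\C(-, Y)$ with $Y \in \widehat{\W}$ to the $\E$-triangle in (3) gives surjectivity of $\C(Y^C, Y) \to \C(C, Y)$, making $\psi^C$ a $\widehat{\W}$-preenvelope of $C$. The main technical obstacle is the pushout step in the inductive proof of (1) $\Rightarrow$ (2): one must ensure that the pushout in \cite{NP} of two inflations with a common source simultaneously delivers both of the induced $\E$-triangles, so that the extension-closure argument for $X_C$ and the resolution-length bookkeeping for $L$ both have something to act on; once those octahedral data are correctly placed, the remaining dimension-tracking through (ET4) and (ET4)$^{\rm op}$ is essentially homological bookkeeping.
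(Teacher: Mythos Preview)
Your argument is correct and follows the classical Auslander--Buchweitz approximation strategy; the paper itself gives no proof here, deferring instead to the dual of \cite[Proposition~3.6]{MDZH}, where essentially the same inductive (ET4)/(ET4)$^{\rm op}$ construction is carried out. The pushout step you single out is indeed the only place requiring care in the extriangulated setting, and it is handled by \cite[Proposition~3.15]{NP}: pushing out $K\to L\to X'_K$ along $K\to X_0$ and pushing out $K\to X_0\to C$ along $K\to L$ each produce a conflation $K\to L\oplus X_0\to X_C$, and since the two resulting inflations $K\to L\oplus X_0$ differ only by an automorphism of $L\oplus X_0$, both $\E$-triangles $L\to X_C\to C$ and $X_0\to X_C\to X'_K$ live over the same object $X_C$.
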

\begin{proof}
The proof is dual to that of \cite[Proposition 3.6]{MDZH}.
\end{proof}

%The following result is important for the rest of the paper.
%\begin{prop}\label{prop2}
%Let $\X$ be closed under extensions such that $\W$ is an $\X$-injective cogenerator for $\X$. Then $\widehat{\X}$ is closed under extensions, and hence an extriangulated category.
%\end{prop}
%\begin{proof}
%The proof is dual to that of \cite[Corollary 3.7]{MDZH}.
% \end{proof}

 %\subsection{Frobenius pairs}
% In this subsection, we introduce the notion of left Frobenius pairs. It will be shown that left Frobenius pairs have very nice homological properties.

\begin{df}\label{df:Frobenius-pair}
A pair $(\X,\W)$ is called a left \emph{Frobenius pair} in $\C$ if the following holds:

 $(1)$ $\X$ is closed under extensions and CoCone of deflations,

 $(2)$ $\mathcal{W}$ is an $\mathcal{X}$-injective cogenerator for $\X$.

 If in addition $\W$ is also an $\mathcal{X}$-projective generator for $\X$, then we say $(\X,\W)$ is a \emph{strong left Frobenius pair}.
\end{df}

\begin{ex}(1)\label{ex} Assume that $\C=R$-$\mathrm{Mod}$ is the category of left $R$-modules for a ring $R$. A left $R$-module $N$ is called \emph{Gorenstein projective} \cite{Holm,EJ2}~if there is an exact sequence of projective modules
$$\mathbf{P}= \cdots\rightarrow P_{1}\rightarrow P_{0}\rightarrow P^{0}\rightarrow P^{1}\rightarrow\cdots$$
with $N=\mathrm{Ker}(P_{0}\rightarrow P^{0})$ such that $\mathrm{Hom}_{R}(\mathbf{P},Q)$ is exact for any projective left $R$-module $Q$. Let $\mathcal{GP}(R)$ be the full subcategory of $R$-$\mathrm{Mod}$ consisting of all Gorenstein projective modules and $\mathcal{P}(R)$ the subcategory of  $R$-$\mathrm{Mod}$ consisting of all projective modules. Then $(\mathcal{GP}(R),\mathcal{P}(R))$ is a strong left Frobenius pair.

(2) Let $\C$ be a triangulated category with a proper class $\xi$ of triangles.
Asadollahi and Salarian \cite{AS} introduced and studied $\xi$-Gprojective and
$\xi$-Ginjective objects, and developed a relative homological algebra in $\C$.
Let $\mathcal{GP}(\xi)$ denotes the full subcategory of $\xi$-Gprojective objects and $\mathcal{P}(\xi)$ denotes the full subcategory of $\xi$-projective objects.
Then $(\mathcal{GP}(\xi),\mathcal{P}(\xi))$ is a strong left Frobenius pair.

(3) Let $\mathcal{T}$ be a triangulated category, and let $\mathcal{M}$ be a silting subcategory of $\mathcal{T}$ with $\mathcal{M}=add\mathcal{M}$, where $add\mathcal{M}$ is the  smallest full subcategory of $\mathcal{T}$ which
contains $\mathcal{M}$ and which is closed under taking isomorphisms, finite direct sums, and
direct summands. Then $(\mathcal{T}_{\geq0}, \mathcal{M})$ is a left Frobenius pair by \cite[Corollary 3.7]{MDZH} and \cite[Proposition 2.7]{IY}, where $\mathcal{T}_{\geq0}:=\bigcup\limits_{n\geq0} \mathcal{M}[-n]*\cdots\mathcal{M}[-1]*\mathcal{M}$.

(4) In \cite{CLY}, the authors showed that if $(\mathcal{X},\mathcal{Y})$ is a complete and hereditary cotorsion pair in an abelian category $\mathcal{A}$ and $\mathcal{Y}$ is closed under kernels of epimorphisms, then $(\mathcal{G}(\mathcal{X})\bigcap\mathcal{Y},\mathcal{X}\bigcap\mathcal{Y})$ is a strong left Frobenius pair, where $\mathcal{G}(\mathcal{X})$ is the class of objects $M$ in $\mathcal{A}$ satisfying that there exists an exact sequence
$$\mathbf{X}= \cdots\rightarrow X_{1}\rightarrow X_{0}\rightarrow X^{0}\rightarrow X^{1}\rightarrow\cdots$$
with each term in $\mathcal{X}$ such that $M\cong\mathrm{Ker}(X^{0}\rightarrow X^{1})$ and ${\rm Hom}_{\mathcal{A}}(\mathbf{X},Q)$ is exact for any object $Q$ in $\mathcal{X}\bigcap\mathcal{Y}$.
\end{ex}
 %The following lemma is key to that proof of Theorem \ref{thm3}.
\begin{lem}\label{lem1}
Let $(\X,\W)$ be a left Frobenius pair. Given an $\E$-triangle
$\xymatrix{K\ar[r]^{x}&X\ar[r]^{y}&C\ar@{-->}[r]^{\delta}&}$ with $X\in\X$. Then $C\in \widehat{\X}$ if and only if $K\in \widehat{\X}$.
\end{lem}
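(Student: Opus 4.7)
The plan is to prove the two implications separately. The direction $K\in\widehat{\X}\Rightarrow C\in\widehat{\X}$ is routine, while the converse requires combining Theorem~\ref{thm2} with a pullback and an application of (ET4)$^{\rm op}$.

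For the easy direction, given an $\E$-triangle sequence $X_m\to\cdots\to X_0\to K$ witnessing $K\in\widehat{\X_m}$, I simply splice the given $\E$-triangle $K\to X\to C$ onto the right end. The resulting sequence $X_m\to\cdots\to X_0\to X\to C$ is again an $\E$-triangle sequence, with the new differential $X_0\to X$ factoring as $X_0\to K\hookrightarrow X$; the associated $\E$-triangles are $K_1\to X_0\to K$ (from the resolution of $K$) and the given $K\to X\to C$. Hence $C\in\widehat{\X_{m+1}}\subseteq\widehat{\X}$.

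For the converse I split on $n:=\mathrm{resdim}_{\X}(C)$. If $n=0$ then $C\in\X$, and closure of $\X$ under CoCone of deflations applied to $K\to X\to C$ immediately yields $K\in\X$. For $n\geq 1$, I first invoke Theorem~\ref{thm2}(2) to obtain an $\E$-triangle $Y_C\to X_C\to C$ with $X_C\in\X$ and $Y_C\in\widehat{\W_{n-1}}$. A short induction on resolution length, using the long exact sequences for higher extensions together with $\W\subseteq\X^{\perp}$, gives $\widehat{\W}\subseteq\X^{\perp}$. Pulling back the deflation $X\to C$ along $X_C\to C$ produces an object $E$ with $\E$-triangles $Y_C\to E\to X$ and $K\to E\to X_C$; the first splits, since $\E^{1}(X,Y_C)=0$, so $E\cong Y_C\oplus X$. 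I then pick an $\E$-triangle $L_1\to W_0\to Y_C$ starting a $\W$-resolution of $Y_C$ (with $W_0\in\W$, $L_1\in\widehat{\W_{n-2}}$), sum it with the trivial $\E$-triangle $0\to X\to X$ to form $L_1\to W_0\oplus X\to E$, and apply (ET4)$^{\rm op}$ to the composable deflations $W_0\oplus X\to E\to X_C$. This produces an object $E'$ together with $\E$-triangles $E'\to W_0\oplus X\to X_C$ (which, combined with $W_0\oplus X,X_C\in\X$ and CoCone closure, forces $E'\in\X$) and $L_1\to E'\to K$ (in which $L_1\in\widehat{\W_{n-2}}\subseteq\widehat{\X_{n-2}}\subseteq\widehat{\X}$, since $\W\subseteq\X$). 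Applying the already-proved easy direction to $L_1\to E'\to K$ delivers $K\in\widehat{\X}$, completing the proof.

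The delicate point is this octahedron step. A naive attempt using only the pullback yields the $\E$-triangle $K\to E\to X_C$ whose middle term $E\cong Y_C\oplus X$ sits in $\widehat{\X}$ but not in $\X$, so the easy direction cannot be applied directly. The role of (ET4)$^{\rm op}$ is to replace $E$ by $W_0\oplus X\in\X$ at the cost of shifting the ``dimension'' from the middle into a new left-hand kernel $L_1$; this is precisely where the $\X$-injectivity of $\W$ (governing the split of $Y_C\to E\to X$) and the cogenerator property of $\W$ (producing the $\W$-resolution of $Y_C$) work together.
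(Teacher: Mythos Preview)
Your argument is correct. The paper itself does not prove this lemma explicitly but simply cites the dual of \cite[Lemma~3.8]{MDZH}; your pullback-plus-(ET4)$^{\rm op}$ maneuver is precisely the standard Auslander--Buchweitz argument one expects that dual to unfold into. One small remark: the step where the pullback $E$ simultaneously sits in $\E$-triangles $Y_C\to E\to X$ and $K\to E\to X_C$ deserves a pointer to \cite[Proposition~3.15]{NP} (or its dual), since in a bare extriangulated category this bicartesian behaviour is a lemma rather than a definition.
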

\begin{proof}
The proof is dual to that of \cite[Lemma 3.8]{MDZH}.
\end{proof}

%Now we give characterizations of $\X$-resolution dimensions of objects in $\widehat{\X}$.
\begin{prop}\label{thm3}
Let $(\X,\W)$ be a left Frobenius pair. The following statements are equivalent for any $C\in \widehat{\X}$ and non-negative integer $n$.

(1) resdim$_{\X}(C)\leq n$.

(2) If $U\rightarrow X_{n-1}\rightarrow\cdots\rightarrow X_{0}\rightarrow C$ is an $\E$-triangle sequence with $X_{i}\in \X$ for $0\leq i\leq n-1$, then $U\in \X$.

\end{prop}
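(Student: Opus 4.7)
The plan is to prove the two implications separately: a direct construction suffices for (2) $\Rightarrow$ (1), while (1) $\Rightarrow$ (2) calls for induction on $n$.

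For (2) $\Rightarrow$ (1), starting with $C\in\widehat{\X}$ I iteratively apply Theorem \ref{thm2}(2): at each step I obtain an $\E$-triangle $K_{i+1}\to X_i\to K_i$ with $X_i\in\X$, and Lemma \ref{lem1} ensures $K_{i+1}\in\widehat{\X}$, so the iteration continues. After $n$ steps this produces an $\E$-triangle sequence $U\to X_{n-1}\to\cdots\to X_0\to C$ with each $X_i\in\X$. Hypothesis (2) then forces $U\in\X$, whence $C\in\widehat{\X_n}$ and resdim$_\X(C)\leq n$.

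For (1) $\Rightarrow$ (2), I induct on $n$. The base $n=0$ is immediate since the sequence collapses to $U=C\in\X$. For $n\geq 1$, I split the given sequence at its first $\E$-triangle $K_1\to X_0\to C$, leaving the residual sequence $U\to X_{n-1}\to\cdots\to X_1\to K_1$ of length $n-1$; by Lemma \ref{lem1}, $K_1\in\widehat{\X}$. The inductive hypothesis applied to $K_1$ will yield $U\in\X$, provided I can show resdim$_\X(K_1)\leq n-1$.

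To establish this dimension bound I pick a witness sequence $V\to X'_{n-1}\to\cdots\to X'_0\to C$ with $V,X'_i\in\X$ for resdim$_\X(C)\leq n$, and denote its first $\E$-triangle by $K'_1\to X'_0\to C$, so that $K'_1\in\widehat{\X_{n-1}}$. A Schanuel-type pullback, obtained by applying $(\mathrm{ET4})^{\mathrm{op}}$ to the two deflations $X_0\to C$ and $X'_0\to C$, produces an object $Y$ fitting into $\E$-triangles $K'_1\to Y\to X_0$ and $K_1\to Y\to X'_0$. The extension-closure of $\X$, applied horseshoe-style against an $\X$-resolution of $K'_1$, yields $Y\in\widehat{\X_{n-1}}$. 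Finally, the $\E$-triangle $K_1\to Y\to X'_0$, combined with CoCone-closure of $\X$ applied via $(\mathrm{ET4})^{\mathrm{op}}$ to a resolution of $Y$ and the deflation $Y\to X'_0$, forces $K_1\in\widehat{\X_{n-1}}$.

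The principal obstacle is the bookkeeping in the Schanuel-type comparison: one must invoke $(\mathrm{ET4})^{\mathrm{op}}$ twice, first to build $Y$ and then to shift the resolution dimension down to $K_1$, and check at each step that the auxiliary objects land in $\X$ or in $\widehat{\X_{n-1}}$. It is precisely the joint extension-closure and CoCone-closure of $\X$, which together form the structural part of a left Frobenius pair, that makes this argument go through.
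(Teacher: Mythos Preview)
Your route for $(1)\Rightarrow(2)$ differs genuinely from the paper's. The paper does not induct or invoke Schanuel: using Theorem~\ref{thm2} it produces a resolution $W_n\to\cdots\to W_1\to X\to C$ with $X\in\X$ and each $W_i\in\W$, then dimension-shifts (via $\widehat{\W}\subseteq\X^\perp$) along both this resolution and the given one to obtain $\E^i(U,Y)\cong\E^{n+i}(C,Y)=0$ for all $i\geq1$ and $Y\in\widehat{\W}$. Since $U\in\widehat{\X}$ by Lemma~\ref{lem1}, the approximation $Y_U\to X_U\to U$ from Theorem~\ref{thm2} (with $Y_U\in\widehat{\W}$) then splits, so $U$ is a summand of $X_U\in\X$. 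This is short and makes the role of the $\X$-injectivity of $\W$ completely transparent.

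Your inductive approach can be completed, but the step ``extension-closure of $\X$, applied horseshoe-style \dots\ yields $Y\in\widehat{\X_{n-1}}$'' is a genuine gap as written. A horseshoe construction would need a section $X_0\to Y$ of the deflation $Y\to X_0$, and extension-closure of $\X$ alone does not supply one; there is no reason for the $\E$-triangle $K'_1\to Y\to X_0$ to split. What actually makes this step work is again the $\X$-injectivity of $\W$: taking the approximation $Y_{K'_1}\to X_{K'_1}\to K'_1$ with $Y_{K'_1}\in\widehat{\W_{n-2}}$ from Theorem~\ref{thm2}, the vanishing $\E^{\geq1}(X_0,Y_{K'_1})=0$ lets one lift the extension $K'_1\to Y\to X_0$ along $X_{K'_1}\to K'_1$ to an extension $X_{K'_1}\to\widetilde Y\to X_0$; then $\widetilde Y\in\X$ by extension-closure, and (ET4) produces an $\E$-triangle $Y_{K'_1}\to\widetilde Y\to Y$, whence $Y\in\widehat{\X_{n-1}}$. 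Thus your closing remark that ``precisely'' extension-closure and CoCone-closure of $\X$ drive the argument is not accurate: the hypothesis $\W\subseteq\X^\perp$ is doing essential work at this point, exactly as it does in the paper's direct proof.
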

\begin{proof}
 $(2)\Rightarrow(1)$ is trivial.

$(1)\Rightarrow(2).$ Let $C$ be in $\widehat{\X}$. Then by Theorem \ref{thm2}, we have an $\E$-triangle sequence
$W_{n}\rightarrow \cdots \rightarrow W_{1}\rightarrow X  \rightarrow C$ with $X\in \X$ and $W_{i}\in\W$ for $1\leq i\leq n$.
Since $\mathcal{W}\subseteq\mathcal{X}^{\perp}$, it is easy to see that $\widehat{\mathcal{W}}\subseteq\mathcal{X}^{\perp}$.
Thus we have $\E^{n+i}(C,Y)\cong \E^{i}(W_{n},Y)=0$ for all $i\geq 1$ and $Y\in \widehat{\W}$.
 If $U\rightarrow X_{n-1}\rightarrow\cdots\rightarrow X_{0}\rightarrow C$ is an $\E$-triangle sequence with $X_{i}\in \X$ for $0\leq i\leq n-1$, then we have  $\E^{i}(U, Y)\cong\E^{n+i}(C,Y)=0$ for all $i\geq 1$ and $Y\in \widehat{\W}$. Note that $U\in \widehat{\X}$ by Lemma \ref{lem1}.
 Hence there exists an $\E$-triangle
$\xymatrix{Y_{U}\ar[r]^{}&X_{U}\ar[r]^{}&U\ar@{-->}[r]^{}&}$ with $X_{U}\in \X$ and $Y_{U}\in \widehat{\W}$ by Theorem \ref{thm2}. It follows that the above $\E$-triangle splits. Hence $U\in \X$.
\end{proof}
If $\X$ is a subcategory of $\C$, then we denote by $\mathrm{Thick}(\X)$ the smallest thick subcategory that contains $\X$. The following result shows that for a left Frobenius pair $(\X,\W)$, $\widehat{\X}$ is an extriangulated category. In particular, if $\C$ is a triangulated category, then $\widehat{\X}$ is the smallest triangulated subcategory of $\C$ containing $\X$ and is closed under direct summands and isomorphisms.
\begin{prop}\label{cor2}
Let $(\X,\W)$ be a left Frobenius pair. Then $\mathrm{Thick(\X)}=\widehat{\X}$.
\end{prop}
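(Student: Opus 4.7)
The plan is to prove the two inclusions $\widehat{\X}\subseteq\mathrm{Thick}(\X)$ and $\mathrm{Thick}(\X)\subseteq\widehat{\X}$ separately. The forward inclusion is a routine induction on the least integer $n$ with $C\in\widehat{\X_{n}}$: the base case $n=0$ gives $C\in\X\subseteq\mathrm{Thick}(\X)$, and for $n\geq 1$ one isolates the first step of the resolution of $C$ as an $\E$-triangle $K\to X_{0}\to C$ with $X_{0}\in\X$ and $K\in\widehat{\X_{n-1}}$, so that the induction hypothesis puts $K\in\mathrm{Thick}(\X)$ and the 2-out-of-3 property of thick subcategories then forces $C\in\mathrm{Thick}(\X)$.

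For the reverse inclusion, since $\X\subseteq\widehat{\X}$ is immediate, it suffices to show that $\widehat{\X}$ is itself a thick subcategory. This requires verifying two conditions: (a) $\widehat{\X}$ is closed under direct summands, and (b) $\widehat{\X}$ satisfies the 2-out-of-3 property for $\E$-triangles.

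For (b), the first step is to establish extension-closure: given an $\E$-triangle $A\to B\to C$ with $A,C\in\widehat{\X}$, the object $B$ lies in $\widehat{\X}$ by induction on $\mathrm{resdim}_{\X}(A)+\mathrm{resdim}_{\X}(C)$, using that $\X$ itself is closed under extensions and combining Theorem \ref{thm2} with an octahedral construction via (ET4) or (ET4)$^{\rm op}$ to reduce dimensions. The remaining two sub-cases then reduce to the extension case together with Lemma \ref{lem1}. For instance, if $B,C\in\widehat{\X}$, take $Y_{C}\to X_{C}\to C$ from Theorem \ref{thm2} and form the pullback-type octahedron with $A\to B\to C$ to produce $\E$-triangles $Y_{C}\to B'\to B$ and $A\to B'\to X_{C}$; extension-closure applied to the first (using $Y_{C}\in\widehat{\W}\subseteq\widehat{\X}$) gives $B'\in\widehat{\X}$, whence Lemma \ref{lem1} applied to the second places $A\in\widehat{\X}$. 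The sub-case $A,B\in\widehat{\X}$ is handled dually.

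For (a), the $\E$-triangle $Y_{C}\to X_{C}\to C$ of Theorem \ref{thm2} realizes $X_{C}\to C$ as an $\X$-precover because $\W\subseteq\X^{\perp}$. Given $C=C_{1}\oplus C_{2}\in\widehat{\X_{n}}$, the idempotent $e\colon C\to C$ projecting onto $C_{1}$ lifts through this precover to an endomorphism $\tilde{e}$ of $X_{C}$ which is idempotent modulo the $\widehat{\W}$-term; splitting $\tilde{e}$ (using closure of $\X$ under summands) together with an induction on $n=\mathrm{resdim}_{\X}(C)$ produces separate $\X$-resolutions for $C_{1}$ and $C_{2}$. The principal technical obstacle is precisely this direct summand closure, which demands compatibly lifting and splitting idempotents along the entire resolution; the 2-out-of-3 property, by contrast, reduces cleanly to Lemma \ref{lem1} once the extension sub-case is in place and the correct octahedral diagram is drawn.
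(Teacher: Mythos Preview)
Your overall plan---show that $\widehat{\X}$ is thick---is exactly the paper's, and your treatment of the 2-out-of-3 property matches it closely. The paper also cites extension closure (from the dual of \cite[Proposition~3.6]{MDZH}) and handles the remaining two cases by building an octahedron on an approximation triangle from Theorem~\ref{thm2}; it happens to use $Y_B\to X_B\to B$ rather than your $Y_C\to X_C\to C$, but the mechanism and the reduction to Lemma~\ref{lem1} are the same.

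The gap is in your direct-summand argument. The lift $\tilde e$ of the idempotent $e$ through the $\X$-precover $X_C\to C$ is \emph{not} itself idempotent: one only knows that $\tilde e^{\,2}-\tilde e$ factors through $Y_C\in\widehat{\W}$. There is no general device in an extriangulated category for ``splitting'' such an almost-idempotent, and closure of $\X$ under summands is of no use until you have a genuine idempotent on $X_C$. Your sketch therefore does not produce a decomposition $X_C\cong X_1\oplus X_2$, nor separate $\E$-triangles resolving $C_1$ and $C_2$.

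The paper avoids idempotent lifting altogether. From an $\E$-triangle $K\to X\to C_1\oplus C_2$ with $X\in\X$ and $\mathrm{resdim}_{\X}(K)=n-1$, it applies (ET4)$^{\mathrm{op}}$ to the deflations $X\to C_1\oplus C_2\to C_2$ to obtain $\E$-triangles $K\to L_2\to C_1$ and $L_2\to X\to C_2$, and symmetrically an $\E$-triangle $L_1\to X\to C_1$. Taking the direct sum yields $L_1\oplus L_2\to X\oplus X\to C_1\oplus C_2$; Lemma~\ref{lem1} and Proposition~\ref{thm3} then give $L_1\oplus L_2\in\widehat{\X}$ with $\mathrm{resdim}_{\X}(L_1\oplus L_2)\le n-1$, so the induction hypothesis places each $L_i$ in $\widehat{\X}$, and Lemma~\ref{lem1} applied to $L_i\to X\to C_i$ gives $C_i\in\widehat{\X}$. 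No splitting of lifted endomorphisms is needed.
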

\begin{proof}
For any $\E$-triangle $\xymatrix{A\ar[r]^{x}&B\ar[r]^{y}&C\ar@{-->}[r]^{\delta}&,}$~we need to check that if any two of $A, B$ and $C$ are in $\widehat{\X}$, then the third one is in $\widehat{\X}$.
Since $\widehat{\X}$ is closed under extensions by the dual of \cite[Proposition 3.6]{MDZH}, it suffices to show that if $B\in\widehat{\X}$, then $A\in \widehat{\X}$ if and only if $C\in \widehat{\X}$. We first show that if $A$ and $B$ are in $\widehat{\X}$, then $C\in \widehat{\X}$. Since $B\in \widehat{\X}$, we have an $\E$-triangle
$\xymatrix{Y_{B}\ar[r]^{}&X_{B}\ar[r]^{}&B\ar@{-->}[r]^{}&}$ with $X_{B}\in \X, Y_{B}\in\widehat{\W}$. By $(ET4)^{op}$, we obtain a commutative diagram
$$\scalebox{0.9}[0.9]{     \xymatrix{Y_{B}\ar[r]^{}\ar@{=}[d]&L\ar[r]^{}\ar[d]_{}&A\ar[d]^{}&\\
Y_{B}\ar[r]^{}&X_{B}\ar[r]^{}\ar[d]_{}&B\ar[d]^{}&\\
&C\ar@{=}[r]&C.}}$$
 It follows that $L\in \widehat{\X}$ as $A$ and $Y_{B}$ are $\in \widehat{\X}$. Therefore $C\in \widehat{\X}.$

Suppose now $B$ and $C$ are in $\widehat{\X}$. It follows from Lemma \ref{lem1} that $L\in\widehat{\X}$. Applying Lemma \ref{lem1} again to the $\E$-triangle
$\xymatrix{Y_{B}\ar[r]^{}&L\ar[r]^{}&A\ar@{-->}[r]^{}&,}$ one has that $A\in \widehat{\X}$.

Suppose $C_{1}\oplus C_{2}\in \widehat{\X}$. We proceed by induction on $n=$resdim$_{\X}(C_{1}\oplus C_{2})$. If $n=0$, then $C_{1}$ and $C_{2}$ are in $\X$.

Suppose $n>0$. There is an $\E$-triangle
$\xymatrix@C=2.4em{K\ar[r]^{x}&X\ar[r]^{\tiny\begin{bmatrix}y_{1}\\y_{2}\end{bmatrix}}&C_{1}\oplus C_{2}\ar@{-->}[r]^{\delta}&}$
with $X\in \X$ and resdim$_{\X}(K)=n-1$. By $(ET4)^{op}$, we obtain the following commutative diagram

$$\scalebox{0.9}[0.9]{\xymatrix@C=3em{
  K\ar[r]^{} \ar@{=}[d] &L_{2} \ar[r]^{} \ar[d]^{x_{2}}& C_{1} \ar[d]^{\tiny\begin{bmatrix}1\\0\end{bmatrix}} \ar@{-->}[r]^{} &  \\
  K\ar[r]^{x} & X\ar[r]^{\tiny\begin{bmatrix}y_{1}\\y_{2}\end{bmatrix}} \ar[d]^{y_{2}} & C_{1}\oplus C_{2}\ar@{-->}[r]^{\delta} \ar[d]^{\tiny\begin{bmatrix}0&1\end{bmatrix}} &\\
  & C_{2} \ar@{-->}[d]^{\delta_{2}} \ar@{=}[r] &C_{2}\ar@{-->}[d]^{0}&~ \\
  &&~~~&}}$$
  Similarly, we can obtain an $\E$-triangle $\xymatrix{L_{1}\ar[r]^{x_{1}}&X\ar[r]^{y_{1}}&C_{1}\ar@{-->}[r]^{\delta_{1}}&.}$
Hence there is an $\E$-triangle $$\scalebox{0.9}[0.9]{\xymatrix@C=3.5em{L_{1}\oplus L_{2}\ar[r]^{\tiny\begin{bmatrix}x_{1}&0\\0&x_{2}\end{bmatrix}}&X\oplus X\ar[r]^{\tiny\begin{bmatrix}y_{1}&0\\0&y_{2}\end{bmatrix}}&C_{1}\oplus C_{2}\ar@{-->}[r]^{\delta_{1}\oplus\delta_{2}}&.}}$$ By Lemma \ref{lem1}, $L_{1}\oplus L_{2}\in \widehat{\X}$, and Proposition \ref{thm3} shows that resdim$_{\X}(L_{1}\oplus L_{2})\leq n-1$. By the induction hypothesis, $L_{1}$ and $L_{2}$ are in $\widehat{\X}$.
 It follows that $C_{1}$ and $C_{2}$ are in $\widehat{\X}$. Hence $\widehat{\X}$ is closed under direct summands. Hence $\mathrm{Thick(\X)}=\widehat{\X}$.
\end{proof}

\begin{definition} \emph{\cite[Definition 4.1]{NP}}\label{df:cotorsion pair}
{\rm Let $\mathcal{U}$, $\mathcal{V}$ $\subseteq$ $\mathcal{C}$ be a pair of full additive subcategories, closed
under isomorphisms and direct summands. The pair ($\mathcal{U}$, $\mathcal{V}$) is called a {\it cotorsion
pair} on $\mathcal{C}$ if it satisfies the following conditions:

(1) $\mathbb{E}(\mathcal{U}, \mathcal{V})=0$;

(2) For any $C \in{\mathcal{C}}$, there exists a conflation $V^{C}\rightarrow U^{C}\rightarrow C$ satisfying
$U^{C}\in{\mathcal{U}}$ and $V^{C}\in{\mathcal{V}}$;

(3) For any $C \in{\mathcal{C}}$ , there exists a conflation $C\rightarrow V_{C} \rightarrow U_{C}$ satisfying
$U_{C}\in{\mathcal{U}}$ and $V_{C}\in{\mathcal{V}}$.
}
\end{definition}

\begin{lem}\label{prop1}
Let $\X$ and $\W$ be two subcategories of $\C$ such that $\W$ is $\X$-injecive. Then the following statements hold.

(1) If $\W$ is a cogenerator for $\X$, then $\W=\X\bigcap\X^{\perp}=\X\bigcap\widehat{\W}$.

(2) If $\W$ is a cogenerator for $\X$, then $\widehat{\W}=\widehat{\X}\bigcap\X^{\perp}$.
\end{lem}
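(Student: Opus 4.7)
The plan is to prove (1) first and then bootstrap it to obtain (2). For (1), the forward inclusion $\W \subseteq \X \cap \X^{\perp} \cap \widehat{\W}$ is immediate from the hypotheses: $\W \subseteq \X$ because $\W$ is a cogenerator for $\X$, $\W \subseteq \X^{\perp}$ because $\W$ is $\X$-injective, and $\W \subseteq \widehat{\W}$ by taking $n=0$ in the definition of $\widehat{\W_n}$. For the reverse containment $\X \cap \X^{\perp} \subseteq \W$, I would pick $C$ in this intersection, use the cogenerator property to produce an $\E$-triangle $\xymatrix@C=0.6cm{C\ar[r]&W\ar[r]&X'\ar@{-->}[r]^{\delta}&}$ with $W \in \W$ and $X' \in \X$, observe that $\delta \in \E(X', C) = 0$ since $C \in \X^{\perp}$ and $X' \in \X$, conclude the triangle splits, and deduce that $C$ is a direct summand of $W$, hence lies in $\W$ since subcategories are closed under summands.

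For the remaining equality $\X \cap \widehat{\W} = \W$, the crucial auxiliary observation is $\widehat{\W} \subseteq \X^{\perp}$, which I would prove by induction on $n$ for $C \in \widehat{\W_n}$. The base case $n = 0$ is $C \in \W \subseteq \X^{\perp}$. For the inductive step, I would extract the rightmost $\E$-triangle $K_1 \to W_0 \to C \dashrightarrow$ from the defining $\E$-triangle sequence, so that $K_1 \in \widehat{\W_{n-1}}$ and $W_0 \in \W$, and feed it into the long exact sequence of $\E^{\bullet}(X, -)$ for arbitrary $X \in \X$; the induction hypothesis gives $\E^i(X, K_1) = 0$ and the hypothesis $W_0 \in \W$ gives $\E^i(X, W_0) = 0$ for all $i \geq 1$, which sandwiches $\E^i(X, C)$ between two zeros. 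Combined with the first equality of (1), this yields $\X \cap \widehat{\W} \subseteq \X \cap \X^{\perp} = \W$.

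For (2), the inclusion $\widehat{\W} \subseteq \widehat{\X} \cap \X^{\perp}$ is immediate, since $\W \subseteq \X$ entails $\widehat{\W} \subseteq \widehat{\X}$, and $\widehat{\W} \subseteq \X^{\perp}$ was just established in (1). For the reverse, given $C \in \widehat{\X} \cap \X^{\perp}$ with $n = \mathrm{resdim}_{\X}(C)$, I would invoke Theorem \ref{thm2} to produce an $\E$-triangle $Y_C \to X_C \to C \dashrightarrow$ with $X_C \in \X$ and $Y_C \in \widehat{\W_{n-1}}$. Applying the long exact sequence of $\E^{\bullet}(X, -)$ for arbitrary $X \in \X$, and using simultaneously $Y_C \in \widehat{\W} \subseteq \X^{\perp}$ and $C \in \X^{\perp}$, I would sandwich $\E^i(X, X_C)$ between zeros for $i \geq 1$, forcing $X_C \in \X \cap \X^{\perp} = \W$ by part (1). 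Concatenating an $\E$-triangle sequence $W_{n-1} \to \cdots \to W_0 \to Y_C$ witnessing $Y_C \in \widehat{\W_{n-1}}$ with the triangle $Y_C \to X_C \to C$ yields an $\E$-triangle sequence $W_{n-1} \to \cdots \to W_0 \to X_C \to C$ with all non-terminal terms in $\W$, certifying $C \in \widehat{\W_n}$.

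The main obstacle should be upgrading $X_C \in \X$ to $X_C \in \W$ in part (2); this is the bootstrap point where (1) is indispensable, and Theorem \ref{thm2} is tailored precisely to place $Y_C$ in $\widehat{\W}$ so that both sides of the triangle $Y_C \to X_C \to C$ lie in $\X^{\perp}$, pinching the middle term there as well. The final concatenation is a formal matter: the composition $W_0 \to Y_C \hookrightarrow X_C$ automatically supplies the differential $d_1 = g_0 f_1$ required by the definition of an $\E$-triangle sequence, and the remaining $\E$-triangles of the sequence for $Y_C$ carry over verbatim.
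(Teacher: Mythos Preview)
Your argument is correct and follows the standard Auslander--Buchweitz route, which is almost certainly what the paper intends: its own proof is simply the sentence ``The proof is dual to that of \cite[Proposition 4.2]{MDZH}'', so you have supplied the details the paper omits. One small caveat worth flagging: in part~(2) you invoke Theorem~\ref{thm2} to obtain the approximation triangle $Y_C \to X_C \to C$, and that theorem carries the standing hypothesis that $\X$ is closed under extensions, which is not explicitly listed among the hypotheses of the present lemma. This is harmless in context---every application in the paper is to a left Frobenius pair, where $\X$ is closed under extensions by definition, and the cited dual result in \cite{MDZH} presumably carries the same hypothesis---but strictly speaking your proof of~(2) establishes the statement under that additional assumption.
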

\begin{proof}
The proof is dual to that of \cite[Proposition 4.2]{MDZH}.
\end{proof}

The following result gives a method to construct cotorsion pairs on extriangulated categories.
\begin{prop}\label{prop4}
Let $(\X,\W)$ be a left Frobenius pair. Then $(\X, \widehat{\W})$ is a cotorsion pair on the extriangulated category $\mathrm{Thick}(\mathcal{X})$.
\end{prop}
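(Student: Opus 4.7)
The plan is to verify the three axioms of Definition~\ref{df:cotorsion pair} directly for the pair $(\X,\widehat{\W})$, using Proposition~\ref{cor2} to identify the ambient extriangulated category $\mathrm{Thick}(\X)$ with $\widehat{\X}$ and then reading off the two required conflations from Theorem~\ref{thm2}.

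First I would handle the setup. Since $\widehat{\X}$ is extension-closed in $\C$, it carries an extriangulated structure inherited by restriction. Both $\X$ and $\widehat{\W}$ sit inside $\widehat{\X}$: the first trivially, the second by unwinding the definition of $\widehat{\W_{n}}$ and using $\W\subseteq\X$. Each must also be closed under direct summands; for $\X$ this is automatic, and for $\widehat{\W}$ I would invoke the identification $\widehat{\W}=\widehat{\X}\cap\X^{\perp}$ from Lemma~\ref{prop1}(2), combined with the facts that $\widehat{\X}$ is closed under summands (Proposition~\ref{cor2}) and that $\X^{\perp}$ is closed under summands by additivity of $\E^{i}(X,-)$. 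Since $\widehat{\W}\subseteq\X^{\perp}$, axiom~(1), $\E(\X,\widehat{\W})=0$, is then immediate.

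For the conflation axioms, fix $C\in\widehat{\X}$. Because $(\X,\W)$ is a left Frobenius pair, $\X$ is closed under extensions and under CoCone of deflations, and $\W$ is an $\X$-injective cogenerator for $\X$; thus the hypotheses of Theorem~\ref{thm2} hold and all three of its conditions are equivalent. Condition~(2) supplies a conflation $Y_{C}\to X_{C}\to C$ with $X_{C}\in\X$ and $Y_{C}\in\widehat{\W}$, giving axiom~(2) of the cotorsion pair, while condition~(3) supplies a conflation $C\to Y^{C}\to X^{C}$ with $X^{C}\in\X$ and $Y^{C}\in\widehat{\W}$, giving axiom~(3). As all three vertices of each conflation lie in $\widehat{\X}$, these remain conflations in the induced extriangulated structure on $\mathrm{Thick}(\X)$.

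The main obstacle is modest: it is essentially the bookkeeping needed to confirm that $\widehat{\W}$ qualifies as a subcategory in the sense of Definition~\ref{df:cotorsion pair} (that is, is closed under direct summands), and that the $\E$-triangles produced in the ambient $\C$ by Theorem~\ref{thm2} actually live in $\mathrm{Thick}(\X)$. Both facts fall out of Lemma~\ref{prop1}(2) and Proposition~\ref{cor2}, so once those are in hand the verification is a direct appeal to Theorem~\ref{thm2}.
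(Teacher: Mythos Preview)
Your proposal is correct and follows essentially the same route as the paper: identify $\mathrm{Thick}(\X)$ with $\widehat{\X}$ via Proposition~\ref{cor2}, use Lemma~\ref{prop1}(2) together with Proposition~\ref{cor2} to see that $\widehat{\W}$ is closed under direct summands, and read off the required $\E$-vanishing and the two conflations from Theorem~\ref{thm2}. The paper's version is more compressed---it simply says that by Theorem~\ref{thm2} it suffices to check closure of $\widehat{\W}$ under summands---whereas you spell out each axiom of Definition~\ref{df:cotorsion pair} explicitly, but the substance is the same.
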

\begin{proof}
  Note that $\mathrm{Thick}(\mathcal{X})$ is an extriangulated category by \cite[Remark 2.18]{NP}. It suffices to show $\widehat{\W}$ is closed under direct summands by Theorem \ref{thm2}. Note that  $\widehat{\W}=\widehat{\X}\bigcap\X^{\perp}$ by Proposition \ref{prop1}. Since $\mathrm{Thick}(\mathcal{X})$=$\widehat{\X}$ is closed under direct summands by Proposition \ref{cor2}, so is $\widehat{\W}$. This completes the proof.
\end{proof}
Now we are in a position to state and prove the main result of this section.
\begin{thm}\label{thm}
Let $\C$ be an extriangulated category. The assignments
\begin{center}
$(\X, \W)\mapsto (\X, \widehat{\W})$ $~~$ and $~~$ $(\mathcal{U},\mathcal{V})\mapsto ( \mathcal{U}, \mathcal{U}\bigcap\mathcal{V})$
\end{center}
 give mutually inverse bijections between the following classes:

(1) Left Frobenius pairs $(\X, \W)$.

(2) Cotorsion pairs $(\mathcal{U}, \mathcal{V})$ on the extriangulated category $\mathrm{Thick}(\mathcal{U})$ with $\mathcal{V}\subseteq\mathcal{U}^{\perp}$.
\end{thm}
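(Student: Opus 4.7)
The plan is to show that each of the two assignments is well-defined into the claimed target class, and then that the compositions recover the identity.

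For the forward map, I would appeal to results already established. Given a left Frobenius pair $(\X, \W)$, Proposition~\ref{prop4} gives that $(\X, \widehat{\W})$ satisfies the cotorsion-pair axioms on the extriangulated subcategory $\mathrm{Thick}(\X) = \widehat{\X}$ (the last equality is Proposition~\ref{cor2}), and the additional condition $\widehat{\W} \subseteq \X^{\perp}$ is immediate from Lemma~\ref{prop1}(2), which identifies $\widehat{\W}$ with $\widehat{\X} \cap \X^{\perp}$.

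The more substantial work is to show that $(\mathcal{U}, \mathcal{V}) \mapsto (\mathcal{U}, \mathcal{U} \cap \mathcal{V})$ returns a left Frobenius pair, for which I would verify the conditions of Definition~\ref{df:Frobenius-pair} in turn. For closure of $\mathcal{U}$ under extensions, take a conflation $A \to B \to C$ with $A, C \in \mathcal{U}$; then $B \in \mathrm{Thick}(\mathcal{U})$ by thickness, and the long exact extension sequence applied to any $V \in \mathcal{V}$ gives $\E(B, V) = 0$, so the cotorsion-pair conflation $V^{B} \to U^{B} \to B$ splits and $B$ is a direct summand of $U^{B} \in \mathcal{U}$. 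For closure under CoCone of deflations, a conflation $A \to B \to C$ with $B, C \in \mathcal{U}$ puts $A$ in $\mathrm{Thick}(\mathcal{U})$, and the long exact sequence in higher extensions yields $\E(A, V) = 0$, using $\E(B, V) = 0$ together with $\E^{2}(C, V) = 0$ --- this last vanishing is exactly where the extra hypothesis $\mathcal{V} \subseteq \mathcal{U}^{\perp}$ is essential --- after which the same splitting argument lands $A$ in $\mathcal{U}$. The $\mathcal{U}$-injective cogenerator axioms are easier: $\mathcal{U} \cap \mathcal{V} \subseteq \mathcal{U}^{\perp}$ is immediate, and for any $U \in \mathcal{U}$ the cotorsion-pair conflation $U \to V_{U} \to U_{U}$ has $V_{U} \in \mathcal{V}$ and, by the extension closure just proved, $V_{U} \in \mathcal{U}$.

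Finally I would verify the compositions are identities. Starting from $(\X, \W)$, the round trip returns $(\X, \X \cap \widehat{\W})$, and Lemma~\ref{prop1}(1) gives $\X \cap \widehat{\W} = \W$. Starting from $(\mathcal{U}, \mathcal{V})$, the round trip returns $(\mathcal{U}, \widehat{\mathcal{U} \cap \mathcal{V}})$, so I must show $\widehat{\mathcal{U} \cap \mathcal{V}} = \mathcal{V}$. My approach is first to identify $\mathcal{V} = \widehat{\mathcal{U}} \cap \mathcal{U}^{\perp}$: the inclusion $\subseteq$ is by hypothesis combined with $\mathcal{V} \subseteq \mathrm{Thick}(\mathcal{U}) = \widehat{\mathcal{U}}$, while for $Y \in \widehat{\mathcal{U}} \cap \mathcal{U}^{\perp}$ the cotorsion-pair conflation $Y \to V_{Y} \to U_{Y}$ satisfies $\E(U_{Y}, Y) = 0$ and therefore splits, exhibiting $Y$ as a direct summand of $V_{Y} \in \mathcal{V}$. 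Applying Lemma~\ref{prop1}(2) to the Frobenius pair produced in the previous step then yields $\widehat{\mathcal{U} \cap \mathcal{V}} = \widehat{\mathcal{U}} \cap \mathcal{U}^{\perp} = \mathcal{V}$, as desired. The main obstacle I foresee is the CoCone closure in the second step: it is the only place in the argument genuinely requiring higher extension groups, so it is crucial both that the extriangulated framework supply a long exact sequence in all $\E^{i}$ and that the hypothesis $\mathcal{V} \subseteq \mathcal{U}^{\perp}$ encode vanishing of all positive $\E^{i}$, not merely of $\E^{1}$.
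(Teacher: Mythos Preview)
Your proposal is correct and follows essentially the same approach as the paper: forward well-definedness via Proposition~\ref{prop4} and Proposition~\ref{cor2}, backward well-definedness via the long exact $\E$-sequence plus a splitting argument, and the identities via Lemma~\ref{prop1}. Your version is in fact slightly more careful than the paper's in two respects: you prove closure of $\mathcal{U}$ under extensions \emph{before} deducing the cogenerator property (the paper asserts $V\in\mathcal{U}\cap\mathcal{V}$ first and only later says extension closure holds ``similarly''), and you explicitly justify the equality $\mathrm{Thick}(\mathcal{U})\cap\mathcal{U}^{\perp}=\mathcal{V}$, which the paper simply states.
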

\begin{proof}
 Let $(\X,\W)$ is a left Frobenius pair. Then $(\X, \widehat{\W})$ is a  cotorsion pair on the extriangulated category $\widehat{\X}$ by Proposition \ref{prop4}. Note that $\mathrm{Thick(\X)}=\widehat{\X}$ and $\widehat{\mathcal{W}}\subseteq\mathcal{X}^{\perp}$. Then
$(\X, \widehat{\W})$ is a  cotorsion pair on the extriangulated category $\mathrm{Thick}(\mathcal{X})$.

 Assume $(\mathcal{U}, \mathcal{V})$ is a cotorsion pair on the extriangulated category $\mathrm{Thick}(\mathcal{U})$ with $\mathcal{V}\subseteq\mathcal{U}^{\perp}$.
 For $U\in \mathcal{U}$, we have an $\E$-triangle
$\xymatrix{U\ar[r]^{x}&V\ar[r]^{y}&U'\ar@{-->}[r]^{\delta}&}$ with $V\in \mathcal{V}$ and $U'\in \mathcal{U}$.
 Hence $V\in\mathcal{U}\bigcap\mathcal{V}$. Note that $\mathcal{V}\subseteq\mathcal{U}^{\perp}$.
 It follows that $\mathcal{U}\bigcap\mathcal{V}$ is an $\mathcal{U}$-injective cogenerator.
 Let $\xymatrix{Z\ar[r]^{a}&U_{1}\ar[r]^{b}&U_{2}\ar@{-->}[r]^{\delta}&}$ be an $\E$-triangle with $U_{1}, U_{2}\in \mathcal{U}$.
 Then we have an exact sequence $\E(U_{1},V)\longrightarrow\E(Z,V)\longrightarrow\E^{2}(U_{2},V)$ for any $V\in \mathcal{V}$.
 Since $\mathcal{V}\subseteq\mathcal{U}^{\perp}$, $\E(Z,V)=0$.
 Note that $Z\in \mathrm{Thick}(\mathcal{U})$.
 Thus there exists an $\E$-triangle $\xymatrix{V^{Z}\ar[r]^{}& U^{Z}\ar[r]^{}& Z\ar@{-->}[r]^{}&}$ with $U\in \mathcal{U}$ and $V\in \mathcal{V}$ as $(\mathcal{U}, \mathcal{V})$ is a cotorsion pair on the extriangulated category $\mathrm{Thick}(\mathcal{U})$.
 Thus the above $\E$-triangle splits by $\E(Z,\mathcal{V})=0$.
 Hence $Z\in \mathcal{U}$. So $\mathcal{U}$ is closed under CoCone of deflations. Similarly, we can show that $\mathcal{U}$ is closed under extensions.
 Thus $( \mathcal{U}, \mathcal{U}\bigcap\mathcal{V})$ is a left Frobenius pair.

Based the above argument, it is enough to check that the compositions
\begin{center}
$(\mathcal{U}, \mathcal{V})\mapsto( \mathcal{U}, \mathcal{U}\bigcap\mathcal{V})\mapsto( \mathcal{U}, \widehat{\mathcal{U}\bigcap\mathcal{V}})$ $~~$ and $~~$ $(\X, \W)\mapsto (\X, \widehat{\W})\mapsto \X$
\end{center}
 are identities. Since $\mathcal{U}\bigcap\mathcal{V}$ is an $\mathcal{U}$-injective cogenerator for $\mathcal{U}$, $\widehat{\mathcal{U}\bigcap\mathcal{V}}=\widehat{\mathcal{U}}\bigcap \mathcal{U}^{\bot}=\mathrm{Thick}(\mathcal{U})\bigcap\mathcal{U}^{\bot}=\mathcal{V}$ where the first equality is due to Proposition \ref{prop1} and the second equality is due to Proposition \ref{cor2}.
\end{proof}

As a special case of Theorem \ref{thm}, we have the following result.
\begin{cor} \cite[Throrem 5.4]{BMPS}
Let $\mathcal{A}$ be an abelian category. The assignments
\begin{center}
$(\X, \W)\mapsto (\X, \widehat{\W})$ $~~$ and $~~$ $(\mathcal{U},\mathcal{V})\mapsto ( \mathcal{U}, \mathcal{U}\bigcap\mathcal{V})$
\end{center}
 give mutually inverse bijections between the following classes:

(1) Left Frobenius pairs $(\X, \W)$.

(2) Cotorsion pairs $(\mathcal{U}, \mathcal{V})$ on the exact category $\mathrm{Thick}(\mathcal{U})$ with $\mathcal{V}\subseteq\mathcal{U}^{\perp}$.
\end{cor}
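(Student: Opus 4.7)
The plan is to obtain the corollary as a direct specialization of Theorem \ref{thm}. First I would equip the abelian category $\mathcal{A}$ with its canonical extriangulated structure, taking $\mathbb{E} = \mathrm{Ext}^{1}_{\mathcal{A}}(-,-)$ with realization provided by equivalence classes of short exact sequences (as recorded in \cite[Example 2.13]{NP}). Under this identification, inflations are monomorphisms, deflations are epimorphisms, $\mathbb{E}$-triangles are ordinary short exact sequences, and the higher extension groups $\mathbb{E}^{i}$ agree with the usual $\mathrm{Ext}^{i}_{\mathcal{A}}$.

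Next I would verify that each notion appearing in the corollary matches its counterpart in Theorem \ref{thm}. A left Frobenius pair in $\mathcal{A}$ in the sense of Definition \ref{df:Frobenius-pair} coincides with the classical notion from \cite{BMPS}, since ``closed under CoCone of deflations'' translates to ``closed under kernels of epimorphisms''. Likewise, the cotorsion pair axioms of Definition \ref{df:cotorsion pair} reduce to the standard ones on an exact category. The subcategory $\mathrm{Thick}(\mathcal{U})$ is extension-closed in $\mathcal{A}$ and closed under direct summands, hence is an exact subcategory in Quillen's sense, and its induced extriangulated structure is precisely that of the exact category in the statement.

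Finally I would invoke Theorem \ref{thm} to transport the bijection across this dictionary. The main concern I anticipate is the blanket hypothesis that the ambient extriangulated category has enough projectives and injectives. If $\mathcal{A}$ lacks them globally, one should observe that the supporting results (Theorem \ref{thm2}, Lemma \ref{lem1}, and Propositions \ref{thm3}, \ref{cor2}, \ref{prop1}, \ref{prop4}) really only use the $\mathcal{X}$-injective cogenerator $\mathcal{W}$ to build the necessary relative resolutions, so the whole argument goes through inside $\mathrm{Thick}(\mathcal{X})$ without appealing to ambient projectives or injectives. This dictionary-and-restriction check is the single subtle point I expect to need care; everything else is a routine translation from the extriangulated framework to the abelian setting.
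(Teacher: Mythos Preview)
Your approach is correct and matches the paper's, which simply presents this corollary as an immediate specialization of Theorem~\ref{thm} without giving a separate proof. Your additional care in spelling out the abelian/extriangulated dictionary and flagging the enough-projectives/injectives hypothesis goes beyond what the paper records, but the underlying argument is the same.
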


As an application, we have the following result in \cite{MSSS2}.

\begin{cor}\cite[Theorem 3.11]{MSSS2} Let $\C$ be a triangulated category. The assignments
\begin{center}
$(\X, \W)\mapsto (\X, \widehat{\W})$ $~~$ and $~~$ $(\mathcal{U},\mathcal{V})\mapsto ( \mathcal{U}, \mathcal{U}\bigcap\mathcal{V})$
\end{center}
 give mutually inverse bijections between the following classes:

(1) Left Frobenius pairs $(\X, \W)$.

(2) Co-t-structures $(\mathcal{U}, \mathcal{V})$ on the triangulated category $\mathrm{Thick}(\mathcal{U}).$
\end{cor}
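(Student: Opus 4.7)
The plan is to specialize Theorem \ref{thm} to the triangulated setting. Any triangulated category $\C$ with shift $\Sigma$ is canonically extriangulated by declaring $\E(A,B)=\C(A,\Sigma B)$ with conflations being distinguished triangles, so Theorem \ref{thm} already supplies a bijection between left Frobenius pairs $(\X,\W)$ in $\C$ and cotorsion pairs $(\mathcal{U},\mathcal{V})$ on $\mathrm{Thick}(\mathcal{U})$ satisfying $\mathcal{V}\subseteq\mathcal{U}^{\perp}$. By Proposition \ref{cor2}, $\mathrm{Thick}(\mathcal{U})=\widehat{\mathcal{U}}$, which in the triangulated setting is automatically a triangulated subcategory of $\C$. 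Hence the corollary reduces to matching these cotorsion pairs with co-t-structures on $\mathrm{Thick}(\mathcal{U})$ in the sense of \cite{MSSS2}.

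To carry out this identification, I would first translate the vanishing condition: since $\E^i(U,V)=\C(U,\Sigma^i V)$ in the triangulated case, $\E(\mathcal{U},\mathcal{V})=0$ together with $\mathcal{V}\subseteq\mathcal{U}^{\perp}$ unpacks to $\C(U,\Sigma^i V)=0$ for all $U\in\mathcal{U}$, $V\in\mathcal{V}$ and $i\geq 1$, which is the higher Hom-orthogonality underlying a co-t-structure. Next I would extract the shift-stability axioms: given $U\in\mathcal{U}$ and applying the cotorsion pair decomposition to an appropriate shift of $U$ inside $\mathrm{Thick}(\mathcal{U})$ produces a distinguished triangle that, after rotation and splitting via the strong orthogonality, forces $\mathcal{U}$ and $\mathcal{V}$ to be closed under the relevant shifts. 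The approximation conflations provided by the cotorsion pair are already distinguished triangles in $\mathrm{Thick}(\mathcal{U})$ and thus serve as the decomposition triangles of a co-t-structure. The reverse direction is more routine: the shift-stability of a co-t-structure boosts its single Hom-vanishing axiom to vanishing of all higher $\E^i$, and its approximation triangles are conflations, yielding a cotorsion pair with $\mathcal{V}\subseteq\mathcal{U}^{\perp}$.

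The main obstacle I expect is bridging the two formalisms, since the extriangulated cotorsion pair language makes no mention of $\Sigma$ while co-t-structures are intrinsically about shift-stability. The key technical content lies in verifying, within the thick triangulated subcategory $\mathrm{Thick}(\mathcal{U})$, that the closure of $\mathcal{U}$ under extensions and CoCone of deflations together with the strong orthogonality $\mathcal{V}\subseteq\mathcal{U}^{\perp}$ is exactly equivalent to the shift-stability axioms required of a co-t-structure. Once this equivalence is established, the corollary follows at once by composing with the bijection of Theorem \ref{thm}.
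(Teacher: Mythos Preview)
Your approach is correct and follows the same overall strategy as the paper: specialize Theorem \ref{thm} to the triangulated case and identify the cotorsion pairs $(\mathcal{U},\mathcal{V})$ on $\mathrm{Thick}(\mathcal{U})$ with $\mathcal{V}\subseteq\mathcal{U}^{\perp}$ with co-t-structures. The reverse direction (co-t-structure $\Rightarrow$ cotorsion pair with $\mathcal{V}\subseteq\mathcal{U}^{\perp}$) is handled identically in both.

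The one place where the paper is more efficient is the verification of shift-stability $\mathcal{U}[-1]\subseteq\mathcal{U}$. You propose to decompose a shift of $U$ via the cotorsion pair and then split the resulting triangle using the higher orthogonality $\E^{\geq 1}(\mathcal{U},\mathcal{V})=0$; this works, but it is more than what is needed. The paper instead observes that under the bijection of Theorem \ref{thm} the class $\mathcal{U}=\X$ is already known to be closed under extensions and CoCone of deflations (this is part of the definition of a left Frobenius pair), and applies this directly to the triangle $X[-1]\to 0\to X$ to conclude $X[-1]\in\X$ in one line. Your route has the mild advantage of working entirely on the cotorsion-pair side without invoking the Frobenius closure axioms, while the paper's route avoids any decomposition or splitting argument.
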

\begin{proof}
  Let $(\X,\W)$ be a left Frobenius pair. By Theorem \ref{thm}, $(\X, \widehat{\W})$ is a cotorsion pair on the triangulated category $\mathrm{Thick}(\mathcal{\X})$. Since $\X$ is closed under CoCone of deflations and extensions, it is easy to see that $\X[-1]\subseteq \X$. Hence $(\X, \widehat{\W})$ is a co-t-structure on the triangulated category $\mathrm{Thick}(\mathcal{\X})$.

  Assume $(\mathcal{U}, \mathcal{V})$ is a co-t-structure on the triangulated category $\mathrm{Thick}(\mathcal{U})$. It is easy to see that $(\mathcal{U}, \mathcal{V})$ is a cotorsion pair on $\mathrm{Thick}(\mathcal{U})$ with id$_{\mathcal{U}}(\mathcal{V})=0$. Hence the corollary follows from Theorem \ref{thm}.
\end{proof}

\begin{df} \cite[Definition 2.1]{HW}
Let $R$ and $S$ be rings. An $(S$-$R)$-bimodule $C={_{S}C}_{R}$ is semidualizing if:

(1) $_{S}C$ admits a degreewise finite $S$-projective resolution.

(2) $C_{R}$ admits a degreewise finite $R$-projective resolution.

(3) The homothety map $_{S}S_{S}\xrightarrow{_{S}\gamma}\mathrm{Hom}_{R}(C,C)$ is an isomorphism.

(4) The homothety map $_{R}R_{R}\xrightarrow{\gamma_{R}}\mathrm{Hom}_{S}(C,C)$ is an isomorphism.

(5) $\mathrm{Ext}_{S}^{\geq 1}(C,C)=0=\mathrm{Ext}_{R}^{\geq 1}(C,C)$.
\end{df}

\begin{df}\cite[Definition 3.1]{HW}
A semidualizing bimodule $C={_{S}C}_{R}$ is faithfully semidualizing if it satisfies the following conditions for all modules $_{S}N$ and $M_{R}$.

(1) If $\mathrm{Hom}_{S}(C,N)=0,$ then $N=0$.

(2) If $\mathrm{Hom}_{R}(C,M)=0,$ then $M=0$.
\end{df}

\begin{df}\cite[Definition 4.1]{HW}
The Bass class $\mathcal{B}_{C}(S)$ with respect to $C$ consists of all $S$-modules $N$ satisfying

(1) $\mathrm{Ext}_{S}^{\geq 1}(C,N)=0=\mathrm{Tor}^{R}_{\geq 1}(C,\mathrm{Hom}_{S}(C,N))=0$.

(2) The natural evaluation homomorphism $\nu_{N}:C\otimes_{R}\mathrm{Hom}_{S}(C,N)\rightarrow N$ is an isomorphism.
\end{df}

\begin{rem}
Let $C={_{S}C}_{R}$ be a faithfully semidualizing module. Then Bass class $\mathcal{B}_{C}(S)$ is an exact category by \cite[Theorem 6.2]{HW} and $\mathcal{B}_{C}(S)$ has enough projectives and injectives by \cite[Remark 3.13]{GD}.
\end{rem}

By \cite{HW}, the class of \emph{$C$-projective} left $S$-modules, denoted
by $\mathcal{P}_{C}(S)$ consists of those
left $S$-modules of the form $C\otimes_{R} P$ for some projective left $R$-module $P$. Recall from \cite{GD} that a left $S$-module $M$ is called \emph{$C$-Gorenstein projective} if there is an exact sequence of left $S$-modules
$$\mathbf{W}= \cdots\rightarrow W_{1}\rightarrow W_{0}\rightarrow W^{0}\rightarrow P^{1}\rightarrow\cdots$$
with each term in $\mathcal{P}_{C}(S)$ such that $N\cong\mathrm{Ker}(W_{0}\rightarrow W^{0})$ and both $\mathrm{Hom}_{R}(\mathbf{W},Q)$ and $\mathrm{Hom}_{R}(Q,\mathbf{W})$ are exact for any object $Q$ in $\mathcal{P}_{C}(S)$. It should be noted that $C$-Gorenstein projectives defined here
are different from those defined in \cite{HJs} when $S = R$ is a commutative Noetherian ring (see \cite[Proposition 3.6]{GD}).

For convenience, we write $G_{C}$-$Proj(S)$ for the classes of
$C$-Gorenstein projective left $R$-modules. By \cite[Proposition 3.5]{GD}, one has that $G_{C}$-$Proj(S)$ $\subseteq\mathcal{B}_{C}(S)$.
As a consequence of Theorem \ref{thm}, we have the following result.

\begin{cor} \label{cor1}
Let $C={_{S}C}_{R}$ be a faithfully semidualizing module. Then

(1) $(G_{C}$-$Proj(S),\mathcal{P}_{C}(S))$ is a strong left Frobenius pair in $\mathcal{B}_{C}(S)$.

(2) $(G_{C}$-$Proj(S),\widehat{\mathcal{P}_{C}(S)})$ is a cotorsion pair on $\widehat{G_{C}\text{-}Proj(S)}$.
\end{cor}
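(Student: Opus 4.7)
The plan is to reduce both statements to the framework established above, with part (1) being the genuine work and part (2) a direct application of Theorem \ref{thm}. Throughout, the ambient extriangulated category is $\mathcal{B}_{C}(S)$, which is exact and has enough projectives and injectives by the remark recalled from \cite{HW,GD}.

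For part (1), I would verify the three defining requirements of a strong left Frobenius pair one by one, using only standard facts about $C$-Gorenstein projective modules that follow from the existence of the defining totally $\mathcal{P}_C(S)$-exact complex $\mathbf{W}$. First, the inclusion $\mathcal{P}_{C}(S)\subseteq G_{C}\text{-}Proj(S)$ is immediate (every module in $\mathcal{P}_C(S)$ admits $\mathbf{W}$ equal to its trivial resolution). Second, that $\mathcal{P}_{C}(S)$ is a cogenerator and a generator for $G_{C}\text{-}Proj(S)$ follows by truncating $\mathbf{W}$ at the zeroth cocycle and zeroth cycle respectively: each truncation yields a short exact sequence $0\to N\to W^{0}\to N'\to 0$ and $0\to N''\to W_{0}\to N\to 0$, and a standard splicing argument shows $N'$ and $N''$ are again $C$-Gorenstein projective. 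Third, $\mathcal{P}_{C}(S)\subseteq G_{C}\text{-}Proj(S)^{\perp}$ and $\mathcal{P}_{C}(S)\subseteq{}^{\perp}G_{C}\text{-}Proj(S)$ follow from the total acyclicity of $\mathrm{Hom}_{R}(\mathbf{W},Q)$ and $\mathrm{Hom}_{R}(Q,\mathbf{W})$ combined with dimension shifting.

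It remains to show $G_{C}\text{-}Proj(S)$ is closed under extensions and under CoCone of deflations in $\mathcal{B}_{C}(S)$. For extensions, given $0\to N_1\to N\to N_2\to 0$ in $\mathcal{B}_C(S)$ with $N_1,N_2\in G_C\text{-}Proj(S)$, one splices the defining complexes of $N_1$ and $N_2$ using the horseshoe lemma in $\mathcal{B}_C(S)$, and checks exactness after $\mathrm{Hom}_R(-,Q)$ and $\mathrm{Hom}_R(Q,-)$ via the long exact sequences (which exist because $\mathcal{P}_C(S)$ is both $G_C\text{-}Proj(S)$-injective and $G_C\text{-}Proj(S)$-projective, established a moment ago). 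For CoCone of deflations, $0\to K\to N_1\to N_2\to 0$ with $N_1,N_2\in G_C\text{-}Proj(S)$ forces $K\in G_C\text{-}Proj(S)$ by the usual argument: build a resolution of $K$ by splicing, and use the long exact $\mathrm{Ext}$ sequences together with $\mathrm{Ext}^{\geq 1}(G_C\text{-}Proj(S),\mathcal{P}_C(S))=0$. The main obstacle here is bookkeeping; these facts are known, and for a self-contained exposition one would cite the appropriate results of \cite{GD} (playing the role that \cite{Holm} plays for ordinary Gorenstein projectives).

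For part (2), by part (1) the pair $(G_{C}\text{-}Proj(S),\mathcal{P}_{C}(S))$ is a (strong) left Frobenius pair in $\mathcal{B}_{C}(S)$. Theorem \ref{thm} then yields that $(G_{C}\text{-}Proj(S),\widehat{\mathcal{P}_{C}(S)})$ is a cotorsion pair on the extriangulated subcategory $\mathrm{Thick}(G_{C}\text{-}Proj(S))$, and Proposition \ref{cor2} identifies $\mathrm{Thick}(G_{C}\text{-}Proj(S))=\widehat{G_{C}\text{-}Proj(S)}$, which is precisely the claimed conclusion.
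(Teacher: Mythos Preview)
Your proposal is correct and follows essentially the same overall architecture as the paper: verify the axioms of a strong left Frobenius pair for part (1), then invoke Theorem \ref{thm} (together with Proposition \ref{cor2}) for part (2). The difference is in how the closure properties of $G_{C}\text{-}Proj(S)$ are obtained. The paper does not argue directly via horseshoe and dimension shifting as you outline; instead it observes that $\mathcal{P}_{C}(S)$ is projectively resolving and self-orthogonal (citing \cite{HW}) and then invokes the Sather-Wagstaff--Sharif--White stability machinery \cite[Theorem 4.12, Proposition 4.11]{SSW} to conclude in one stroke that $G_{C}\text{-}Proj(S)$ is closed under kernels of epimorphisms and under direct summands. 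Your hands-on route is perfectly valid and more self-contained, while the paper's route is shorter by outsourcing to \cite{SSW}. One small omission in your outline: you do not mention closure of $G_{C}\text{-}Proj(S)$ under direct summands, which is needed here because the paper's convention is that ``subcategory'' always means closed under summands; the paper handles this explicitly via \cite[Proposition 4.11]{SSW}, so you should either cite that or add the standard Eilenberg-swindle style argument.
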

\begin{proof}
Since $\mathcal{P}_{C}(S)$ is projectively resolving and $\mathcal{P}_{C}(S)\bot \mathcal{P}_{C}(S)$ by \cite[Corollary 6.4]{HW} and  \cite[Theorem 6.4]{HW}, $G_{C}$-$Proj(S)$ is closed under kernels of epimorphisms and direct summand by \cite[Theorem 4.12]{SSW} and \cite[Proposition 4.11]{SSW}.
 Hence $(G_{C}$-$Proj(S),\mathcal{P}_{C}(S))$ is a strong left Frobenius pair in $\mathcal{B}_{C}(S)$. (2) follows from Theorem \ref{thm}.
\end{proof}

\section{\bf Admissible model structures associated with Frobenius pairs}

In this section, we shall use our results in Section 3 to construct more admissible model structures in extriangulated categories. At first, we need recall the following definition.

%Let $\mathcal{S}$ be a subcategory of a triangulated category $\mathcal{T}$. Recall from \cite[Definition 2.1]{AI} that $\mathcal{S}$ is called \emph{silting} if ${\rm{Hom}}_{\mathcal{T}}(\mathcal{S}, \mathcal{S}[i])=0$ for all $i>0$, and $\mathcal{T}={\rm thick}(\mathcal{S})$ is the smallest triangulated subcategory of $\mathcal{T}$ containing $\mathcal{S}$ and is closed under direct summands and isomorphisms.
%
%The following result was proved in \cite[Theorem 5.5]{MSSS2} and \cite[Proposition 2.8]{IY}. We can prove it by means of left Frobenius pair.
%\begin{cor}\cite[Proposition 2.8]{IY} Let $\mathcal{T}$ be a triangulated category, and let $\mathcal{M}$ be a silting subcategory of $\mathcal{T}$ with $\mathcal{M}=add\mathcal{M}$.
%Then $(\mathcal{T}_{\geq0},\mathcal{T}_{\leq0})$ is a bounded co-t-structure on $\mathcal{T}$, where
%\begin{center}
%$\mathcal{T}_{\geq0}:=\bigcup\limits_{n\geq0} \mathcal{M}[-n]*\cdots\mathcal{M}[-1]*\mathcal{M}$ $~~$ and $~~$ $\mathcal{T}_{\leq0}:=\bigcup\limits_{n\geq0} \mathcal{M}*\cdots\mathcal{M}[1]*\cdots\mathcal{M}[n]$.
%\end{center}
%\end{cor}
%\begin{proof}
%By Example \ref{ex} (3), $(\mathcal{T}_{\geq0}, \mathcal{M})$ is a left Frobenius pair.
% Thus $(\mathcal{T}_{\geq0},\mathcal{T}_{\leq0})$ is a co-t-structure on $\mathcal{T}$.
%\end{proof}
\begin{definition} \emph{ \cite[Definition 4.2]{NP}}\label{df:cotorsion pair}
{\rm Let $\mathcal{X}$, $\mathcal{Y}$  be two subcategories of $\C$. Define full subcategories $\textrm{Cone}(\mathcal{X},\mathcal{Y})$ and $\textrm{CoCone}(\mathcal{X},\mathcal{Y})$ of
$\mathcal{C}$ as follows.

(1) $C$ belongs to $\textrm{Cone}(\mathcal{X},\mathcal{Y})$ if and only if it admits a conflation
$X \rightarrow Y \rightarrow C$ satisfying $X\in{\mathcal{X}}$ and $Y\in{\mathcal{Y}}$;

(2) $C$ belongs to $\textrm{CoCone}(\mathcal{X},\mathcal{Y})$  if and only if it admits a conflation
$C\rightarrow X \rightarrow Y$ satisfying $X\in{\mathcal{X}}$ and $Y\in{\mathcal{Y}}$.}
\end{definition}

\begin{definition} \emph{ \cite[Definition 5.1]{NP}}\label{df:hovey-cotorsion pair}
{\rm Let ($\mathcal{S}$, $\mathcal{T}$) and ($\mathcal{U}$, $\mathcal{V}$) be cotorsion pairs on $\mathcal{C}$. Then $\mathcal{P}=((\mathcal{S}$, $\mathcal{T}$), ($\mathcal{U}$, $\mathcal{V}$)) is called a {\it twin cotorsion pair} if it satisfies
$\mathbb{E}(\mathcal{S}, \mathcal{V})=0$. Moreover, $\mathcal{P}$ is called a {\it Hovey twin cotorsion pair} if it satisfies
$\textrm{Cone}(\mathcal{V},\mathcal{S})$ = $\textrm{CoCone}(\mathcal{V},\mathcal{S})$.}
\end{definition}

In \cite{NP} Nakaoka and Palu gave a correspondence between admissible model structures and Hovey twin cotorsion pairs on $\mathcal{C}$. Essentially, an  admissible model structure on $\mathcal{C}$ is a Hovey twin cotorsion pair $\mathcal{P}=((\mathcal{S}$, $\mathcal{T}$), ($\mathcal{U}$, $\mathcal{V}$)) on $\mathcal{C}$. For more details, we refer to \cite[Section 5]{NP}.
 By a slight abuse of language we often refer to a Hovey twin cotorsion pair as an admissible model structure.

\begin{lem}\label{lem2}
Let $(\X,\W)$ be a strong left Frobenius pair in $\C$. Then $(\W, \widehat{\X})$ is a cotorsion pair on the extriangulated category $\mathrm{Thick}(\X)$.
\end{lem}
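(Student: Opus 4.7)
The plan is to verify the three conditions for $(\W,\widehat{\X})$ to form a cotorsion pair on the extriangulated category $\mathrm{Thick}(\X)=\widehat{\X}$. Observe first that $\W\subseteq\X\subseteq\widehat{\X}$, that $\widehat{\X}$ is closed under direct summands by Proposition \ref{cor2}, and that $\W$ is closed under direct summands by our standing convention, so both classes satisfy the built-in closure requirements. For the orthogonality $\E(\W,\widehat{\X})=0$, use the strong Frobenius hypothesis that $\W$ is $\X$-projective, so $\E^i(W,X)=0$ for every $W\in\W$, $X\in\X$ and $i\geq 1$. Given $Y\in\widehat{\X_n}$ with an $\E$-triangle resolution $X_n\to\cdots\to X_0\to Y$, iterate the long exact sequence in the second variable from \cite[Proposition 5.2]{LN} (dimension shifting along the successive syzygies) to obtain $\E^i(W,Y)=0$ for all $i\geq 1$; in particular $\E(W,Y)=0$.

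For the existence of a conflation $V^{C}\to U^{C}\to C$ with $U^{C}\in\W$ and $V^{C}\in\widehat{\X}$: given $C\in\widehat{\X}$, apply Theorem \ref{thm2}(2) to pick $Y_{C}\to X_{C}\to C$ with $X_{C}\in\X$ and $Y_{C}\in\widehat{\W}$, and use $\W$ being a generator for $\X$ to pick $X'\to W\to X_{C}$ with $W\in\W$, $X'\in\X$. These two $\E$-triangles share $X_{C}$ in exactly the configuration handled by $(ET4)^{\mathrm{op}}$ in the proof of Proposition \ref{cor2}, and the resulting $3\times 3$ commutative diagram delivers a conflation $V^{C}\to W\to C$ together with a companion conflation $X'\to V^{C}\to Y_{C}$. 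Since $X'\in\X\subseteq\widehat{\X}$ and $Y_{C}\in\widehat{\W}\subseteq\widehat{\X}$, thickness (Proposition \ref{cor2}) forces $V^{C}\in\widehat{\X}$.

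For the existence of a conflation $C\to V_{C}\to U_{C}$ with $U_{C}\in\W$ and $V_{C}\in\widehat{\X}$: given $C\in\widehat{\X}$, apply Theorem \ref{thm2}(3) to obtain $C\to Y^{C}\to X^{C}$ with $X^{C}\in\X$, $Y^{C}\in\widehat{\W}$, and again use the generator property to pick $X'\to W\to X^{C}$ with $W\in\W$, $X'\in\X$. Pulling the first $\E$-triangle back along the deflation $W\to X^{C}$ via the standard pullback construction in an extriangulated category yields an $\E$-triangle $C\to V_{C}\to W$ that fits into a $3\times 3$ diagram whose remaining column is a conflation $X'\to V_{C}\to Y^{C}$; thickness again gives $V_{C}\in\widehat{\X}$. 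The main technical point throughout is assembling both $3\times 3$ diagrams so that the target conflation (ending at, or starting from, an object of $\W$) emerges simultaneously with the companion conflation whose outer terms already lie in $\X$ and $\widehat{\W}$; after these diagrams are in hand the conclusion reduces to routine invocations of Proposition \ref{cor2}.
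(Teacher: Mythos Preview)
Your proof is correct and follows the paper's approach for conditions~(1) and~(2): the orthogonality $\E(\W,\widehat{\X})=0$ via $\X$-projectivity of $\W$ and dimension shifting, and the conflation $V^{C}\to W\to C$ assembled from Theorem~\ref{thm2} and the generator property via $(ET4)^{\mathrm{op}}$ are exactly what the paper does.

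The one difference is in condition~(3). You construct the conflation $C\to V_{C}\to W$ by pulling back along a deflation onto $X^{C}$; this works, but it is unnecessary. Since the ambient category is $\mathrm{Thick}(\X)=\widehat{\X}$ itself, every object $C$ already lies in the right-hand class $\widehat{\X}$, so the split conflation $C\xrightarrow{1}C\to 0$ (with $0\in\W$) verifies condition~(3) for free. The paper's terse closing sentence ``The second column and $\E(\W,\widehat{\X})=0$ show that $(\W,\widehat{\X})$ is a cotorsion pair'' is implicitly relying on this triviality, which is why it only bothers with the one nontrivial conflation. Your pullback argument is a valid alternative, but recognising that $\mathcal{V}=$ ambient category makes (3) automatic would streamline the write-up.
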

\begin{proof}
 Since $\W\subseteq{^{\bot}\X}$, one has $\E(\W,\widehat{\X})=0$.
For any $C\in \widehat{\X}$, there exists an $\E$-triangle
$$\xymatrix@C=0.6cm{Y_{C}\ar[r]^{}&X_{C}\ar[r]^{}&C\ar@{-->}[r]^{}&}$$ with $X_{C}\in \X$ and $Y_{C}\in \widehat{\W}$ by Theorem \ref{thm2}. Since $\W$ is a generator for $\X$, we have an $\E$-triangle
$\xymatrix@C=0.6cm{X\ar[r]^{}&W\ar[r]^{}&X_{C}\ar@{-->}[r]^{}&}$ with $W\in\W$ and $X\in \X$.
By $(ET4)^{op}$, we obtain a commutative diagram

$$\scalebox{0.9}[0.9]{\xymatrix{X\ar[r]^{}\ar@{=}[d]&Z\ar[r]^{}\ar[d]_{}&Y_{C}\ar[d]^{}&\\
X\ar[r]^{}&W\ar[r]^{}\ar[d]_{}&X_{C}\ar[d]^{}&\\
&C\ar@{=}[r]&C.}}$$
It follows that $Z\in \widehat{\X}$ as $X$ and $Y_{C}$ are in $\widehat{\X}$. Note that $\mathrm{Thick}(\X)=\widehat{\X}$. The second column and $\E(\W,\widehat{\X})=0$ show that $(\W,\widehat{\X})$ is a cotorsion pair on $\mathrm{Thick}(\X)$.
\end{proof}

\begin{prop}\label{prop8}
Let $(\X,\W)$ be a strong left Frobenius pair in $\C$. Then $\mathcal{P}=((\W, \widehat{\X}),(\X,\widehat{\W}))$ is an admissible model structure on the extriangulated category $\mathrm{Thick}(\X)$
\end{prop}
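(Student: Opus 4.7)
My plan is to verify that $\mathcal{P}=((\W,\widehat{\X}),(\X,\widehat{\W}))$ satisfies the requirements of Definition \ref{df:hovey-cotorsion pair}: namely, that the two components are cotorsion pairs on the extriangulated category $\mathrm{Thick}(\X)=\widehat{\X}$, that the twin condition $\E(\W,\widehat{\W})=0$ holds, and that the Hovey condition $\mathrm{Cone}(\widehat{\W},\W)=\mathrm{CoCone}(\widehat{\W},\W)$ holds. The cotorsion pair $(\X,\widehat{\W})$ on $\mathrm{Thick}(\X)$ comes from Proposition \ref{prop4} (equivalently Theorem \ref{thm}), and $(\W,\widehat{\X})$ on $\mathrm{Thick}(\X)$ comes from Lemma \ref{lem2}. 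The twin condition is immediate: $\W\subseteq\X$ and $\widehat{\W}=\widehat{\X}\cap\X^{\perp}\subseteq\X^{\perp}$ by Lemma \ref{prop1}(2), so $\E(\W,\widehat{\W})\subseteq\E(\X,\X^{\perp})=0$.

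The bulk of the argument is the Hovey condition, and my intention is to prove the stronger identity $\mathrm{Cone}(\widehat{\W},\W)=\widehat{\W}=\mathrm{CoCone}(\widehat{\W},\W)$. The inclusion $\widehat{\W}\subseteq\mathrm{Cone}(\widehat{\W},\W)$ is direct from the definition of $\widehat{\W_{n}}$, since the first $\E$-triangle $K\to W_{0}\to C$ of a finite $\W$-resolution realises $C$ as the required cone. For the reverse $\mathrm{Cone}(\widehat{\W},\W)\subseteq\widehat{\W}$, I concatenate: a conflation $V\to W\to C$ with $V\in\widehat{\W_{m}}$ and $W\in\W$ splices with the $\W$-resolution of $V$ to produce a length-$(m+1)$ resolution of $C$. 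For $\widehat{\W}\subseteq\mathrm{CoCone}(\widehat{\W},\W)$, I apply Lemma \ref{lem2} to produce a conflation $C\to V\to W$ with $W\in\W$ and $V\in\widehat{\X}$; since both $C$ and $W$ lie in $\X^{\perp}$, the long exact sequence of $\E^{i}(X,-)$ forces $\E^{i}(X,V)=0$ for every $X\in\X$ and $i\geq 1$, so $V\in\widehat{\X}\cap\X^{\perp}=\widehat{\W}$.

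The delicate step is $\mathrm{CoCone}(\widehat{\W},\W)\subseteq\widehat{\W}$, which I plan to handle by induction on $m=\mathrm{resdim}_{\W}(V)$ for a conflation $C\to V\to W$ with $V\in\widehat{\W}$ and $W\in\W$. In the base case $V\in\W$, the \emph{strong} Frobenius hypothesis gives $W\in\W\subseteq{}^{\perp}\X$, and since $C\in\X$ by closure of $\X$ under CoCone of deflations, $\E(W,C)=0$; the conflation therefore splits, and $C$ is a direct summand of $V\in\W$. For the inductive step, I take a conflation $K\to W'_{0}\to V$ from the $\W$-resolution of $V$ (with $W'_{0}\in\W$ and $K\in\widehat{\W_{m-1}}$) and apply (ET4)$^{\rm op}$ to the pair $(C\to V\to W,\;K\to W'_{0}\to V)$, which share $V$ as the middle of the first and the end of the second, to obtain an object $E$ fitting into conflations $K\to E\to C$ and $E\to W'_{0}\to W$. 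The second conflation is of base-case shape, so $E\in\W$, and then combining the first with $K\in\widehat{\W_{m-1}}$ via the Cone direction already handled gives $C\in\widehat{\W_{m}}\subseteq\widehat{\W}$. The essential reason that the \emph{strong} Frobenius hypothesis (rather than the plain one) is required is precisely this splitting argument, which rests on $\W\subseteq{}^{\perp}\X$.
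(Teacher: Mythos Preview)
Your proof is correct and follows the same overall structure as the paper's: both reduce to the Hovey condition $\mathrm{Cone}(\widehat{\W},\W)=\mathrm{CoCone}(\widehat{\W},\W)$ after invoking Proposition~\ref{prop4} and Lemma~\ref{lem2}, and both establish the stronger identity with $\widehat{\W}$ in the middle. The only substantive difference is in the argument for $\mathrm{CoCone}(\widehat{\W},\W)\subseteq\widehat{\W}$. You induct on $\mathrm{resdim}_{\W}(V)$ and use (ET4)$^{\mathrm{op}}$ to peel off one step of the $\W$-resolution, splitting only in the base case $V\in\W$ (where $C\in\X$). The paper dispenses with the induction entirely: since $\E(\W,\widehat{\X})=0$ was already shown in the proof of Lemma~\ref{lem2}, and $C$ automatically lies in $\widehat{\X}=\mathrm{Thick}(\X)$, the conflation $C\to V\to W$ splits immediately for \emph{any} $V\in\widehat{\W}$, and one concludes using the closure of $\widehat{\W}$ under direct summands established in Proposition~\ref{prop4}. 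Your inductive route is self-contained and yields the sharper bound $C\in\widehat{\W_{m}}$, while the paper's one-line splitting exploits results already on the shelf. (Incidentally, for the inclusion $\widehat{\W}\subseteq\mathrm{CoCone}(\widehat{\W},\W)$ you can also just use the split conflation $C\to C\to 0$, since $0\in\W$; your argument via Lemma~\ref{lem2} and the long exact sequence is correct but more than needed.)
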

\begin{proof}
By Theorem \ref{thm} and Lemma \ref{lem2}, we only need to check that $\mathrm{Cone}(\widehat{\W},\W)=\mathrm{CoCone}(\widehat{\W},\W)$. It is obvious that $\mathrm{Cone}(\widehat{\W},\W)=\widehat{\W}\subseteq\mathrm{CoCone}(\widehat{\W},\W).$
Let $C\in \mathrm{CoCone}(\widehat{\W},\W)$. Then we have an $\E$-triangle
$\xymatrix@C=0.6cm{C\ar[r]^{}&Y\ar[r]^{}&W\ar@{-->}[r]^{}&}$ with $Y\in\widehat{\W}$ and $W\in \W$. By Theorem \ref{thm2}, one has that $C\in \widehat{\X}.$ Since $\E(\W,\widehat{\X})=0$, it follows that $C$ is a direct summand of $Y$. Note that $\widehat{\W}$ is closed under direct summand by Proposition \ref{prop4}.
 Thus $C\in \widehat{\W}$. Hence the equality $\mathrm{Cone}(\widehat{\W},\W)=\mathrm{CoCone}(\widehat{\W},\W)$ holds.
\end{proof}

\begin{thm}\label{thm6}
Let $(\X,\W)$ be a strong left Frobenius pair in $\C$. If $n$ is a non-negative integer, then the following statements are equivalent:

(1) $\widehat{\X_{n}}=\C$.

(2) $\mathcal{P}=((\W, \C),(\X,\widehat{\W_{n}}))$ is an admissible model structure on $\C$.
\end{thm}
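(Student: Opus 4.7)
The plan is to prove the two implications separately.

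For $(1)\Rightarrow(2)$: the hypothesis $\widehat{\X_{n}}=\C$ forces $\mathrm{Thick}(\X)=\widehat{\X}=\C$, so Proposition~\ref{prop8} immediately provides an admissible model structure $((\W,\C),(\X,\widehat{\W}))$ on $\C$. It therefore suffices to show $\widehat{\W}=\widehat{\W_{n}}$, which upgrades the second cotorsion pair to the one claimed in (2). The inclusion $\widehat{\W_{n}}\subseteq\widehat{\W}$ is immediate, so I focus on the reverse containment. Given $Y\in\widehat{\W}$, I apply Theorem~\ref{thm2}(3) to $Y$ viewed as an object of $\widehat{\X_{n}}=\C$ to produce an $\E$-triangle $Y\to Y^{Y}\to X^{Y}$ with $Y^{Y}\in\widehat{\W_{n}}$ and $X^{Y}\in\X$. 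Since $Y\in\widehat{\W}\subseteq\X^{\perp}$ by Lemma~\ref{prop1}, the extension class lies in $\E(X^{Y},Y)=0$ and the triangle splits as $Y^{Y}\cong Y\oplus X^{Y}$; closure of $\widehat{\W}$ under direct summands then places $X^{Y}$ in $\widehat{\W}\cap\X=\W$.

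Next, take a $\W$-resolution $W_{n}\to\cdots\to W_{0}\to Y^{Y}$ and compose the deflation $W_{0}\to Y^{Y}$ with the split deflation $Y^{Y}\to Y$. Applying $(ET4)^{\mathrm{op}}$ to this composition yields an $\E$-triangle $K_{1}\to M\to X^{Y}$, where $K_{1}\in\widehat{\W_{n-1}}$ is the CoCone of $W_{0}\to Y^{Y}$ and $M$ is the CoCone of the composed deflation $W_{0}\to Y$. A dimension-shift along an $\X$-resolution of $K_{1}\in\widehat{\X}$ upgrades the strong Frobenius condition $\W\subseteq{}^{\perp}\X$ to $\W\subseteq{}^{\perp}\widehat{\X}$, so $\E(X^{Y},K_{1})=0$ and this second $\E$-triangle also splits, producing $M\cong K_{1}\oplus X^{Y}\in\widehat{\W_{n-1}}$ (the latter is closed under finite direct sums). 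Concatenating a length-$(n-1)$ $\W$-resolution of $M$ with the $\E$-triangle $M\to W_{0}\to Y$ yields a length-$n$ $\W$-resolution of $Y$, so $Y\in\widehat{\W_{n}}$.

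For $(2)\Rightarrow(1)$: the cotorsion pair $(\X,\widehat{\W_{n}})$ provides, for each $C\in\C$, an $\E$-triangle $C\to V_{C}\to U_{C}$ with $V_{C}\in\widehat{\W_{n}}$ and $U_{C}\in\X$. Cutting off a $\W$-resolution of $V_{C}$ at its first step gives an $\E$-triangle $K_{1}\to W_{0}\to V_{C}$ with $W_{0}\in\W$ and $K_{1}\in\widehat{\W_{n-1}}$. Applying $(ET4)^{\mathrm{op}}$ to the composition of deflations $W_{0}\to V_{C}\to U_{C}$ produces an object $P$ with two $\E$-triangles $P\to W_{0}\to U_{C}$ and $K_{1}\to P\to C$. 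Since $W_{0},U_{C}\in\X$ and $\X$ is closed under CoCone of deflations, the first triangle forces $P\in\X$. Combined with $K_{1}\in\widehat{\W_{n-1}}\subseteq\widehat{\X_{n-1}}$, the second triangle extends an $\X$-resolution of $K_{1}$ to a length-$n$ $\X$-resolution of $C$, so $C\in\widehat{\X_{n}}$. Since $C$ was arbitrary, $\widehat{\X_{n}}=\C$.

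I expect the main obstacle to be the $(ET4)^{\mathrm{op}}$ diagram chase in $(1)\Rightarrow(2)$: carefully constructing the $\E$-triangle $K_{1}\to M\to X^{Y}$ from the composed deflation and splitting it via the extended orthogonality $\W\subseteq{}^{\perp}\widehat{\X}$. Once this step is in hand, truncating any $\W$-resolution of an object in $\widehat{\W}$ down to length $n$ becomes routine, and the $(2)\Rightarrow(1)$ direction reduces to the same pullback-style construction via $(ET4)^{\mathrm{op}}$.
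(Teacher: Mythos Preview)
Your proof is correct, but it takes a noticeably longer route than the paper's.

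For $(1)\Rightarrow(2)$, both you and the paper reduce to showing $\widehat{\W}=\widehat{\W_n}$. The paper's argument is one line: given $C\in\widehat{\W}$, apply Theorem~\ref{thm2}(2) (not (3)) to obtain an $\E$-triangle $Y_C\to X_C\to C$ with $X_C\in\X$ and $Y_C\in\widehat{\W_{n-1}}$; since $\widehat{\W}$ is closed under extensions and $C,Y_C\in\widehat{\W}$, one gets $X_C\in\X\cap\widehat{\W}=\W$, hence $C\in\widehat{\W_n}$ directly. By choosing the preenvelope triangle from Theorem~\ref{thm2}(3) instead, you are forced into the extra splitting step and the subsequent $(ET4)^{\mathrm{op}}$ diagram chase to manufacture a length-$n$ resolution of $Y$. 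Your detour works (the dimension-shift verifying $\W\subseteq{}^{\perp}\widehat{\X}$ is valid), but it is avoidable.

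For $(2)\Rightarrow(1)$, the paper simply invokes Theorem~\ref{thm2}: the cotorsion-pair conflation $C\to V_C\to U_C$ with $V_C\in\widehat{\W_n}$ and $U_C\in\X$ is exactly condition (3) of that theorem, and since $\X$ is closed under CoCone of deflations, $(3)\Rightarrow(1)$ gives $C\in\widehat{\X_n}$. Your $(ET4)^{\mathrm{op}}$ construction is essentially a direct reproof of this implication. What your approach buys is self-containment (you do not rely on the $(3)\Rightarrow(1)$ direction of Theorem~\ref{thm2}), at the cost of repeating an argument already available.
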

\begin{proof}
$(1)\Rightarrow(2)$. If $\widehat{\X_{n}}=\C$, then $(\W,\C)$ is a cotorsion pair on $\C$ by Lemma \ref{lem2} and $(\X, \widehat{\W})$ is a cotorsion pair on $\C$ by Theorem \ref{thm}. To prove $(2)$, we only need to check that $\widehat{\W_{n}}=\widehat{\W}.$
Note that $\widehat{\W_{n}}\subseteq\widehat{\W}$ is obvious. Let $C\in \widehat{\W}$. Then there is an $\E$-triangle
$\xymatrix@C=0.6cm{Y_{C}\ar[r]^{}&X_{C}\ar[r]^{}&C\ar@{-->}[r]^{}&}$ with $X_{C}\in\X$ and $Y_{C}\in \widehat{\W_{n-1}}$ by Theorem \ref{thm2}.
Since $\widehat{\W}$ is closed under extensions, $X_{C}\in \X\bigcap \widehat{\W}=\W$. Hence $C\in \widehat{\W_{n}}$ implies $\widehat{\W_{n}}=\widehat{\W}$.

$(2)\Rightarrow(1)$. Since $(\X, \widehat{\W_{n}})$ is a cotorsion pair on $\C$, one has that $\C=\widehat{\X_{n}}$ by Theorem \ref{thm2}.
\end{proof}

As an application, we have the following result in \cite{HCc}.
\begin{cor}\cite[Theorem 8.6]{HCc}
 Suppose $R$ is a Gorenstein ring. Let $\mathcal{GP}(R)$ be the  subcategory of $R$-$\mathrm{Mod}$ consisting of Gorenstein projective modules and $\mathcal{P}(R)$ the subcategory of $R$-$\mathrm{Mod}$ consisting of projective modules. Then $((\mathcal{P}(R), R$-$\mathrm{Mod}),(\mathcal{GP}(R),\widehat{\mathcal{P}(R)})$ is an admissible model structure on $R$-$\mathrm{Mod}$.
\end{cor}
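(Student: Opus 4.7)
The plan is to realize this corollary as a direct application of Theorem \ref{thm6} to the strong left Frobenius pair $(\mathcal{GP}(R), \mathcal{P}(R))$ in the extriangulated (indeed abelian) category $\C = R\text{-}\mathrm{Mod}$.

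First, I would point out that Example \ref{ex}(1) already establishes that $(\mathcal{GP}(R), \mathcal{P}(R))$ is a strong left Frobenius pair on $R\text{-}\mathrm{Mod}$, so the hypotheses on $(\X, \W)$ in Theorem \ref{thm6} are satisfied with $\X = \mathcal{GP}(R)$ and $\W = \mathcal{P}(R)$. It remains to produce a non-negative integer $n$ for which condition (1) of Theorem \ref{thm6} holds, namely $\widehat{\mathcal{GP}(R)_n} = R\text{-}\mathrm{Mod}$.

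For this I would invoke the standard characterization of Gorenstein rings: $R$ is Gorenstein if and only if $R$ is Noetherian of finite self-injective dimension on both sides, say of common value $n$. It is then well known (for instance from Enochs--Jenda) that over such a ring every left $R$-module $M$ admits a Gorenstein projective resolution of length at most $n$, i.e.\ there is an exact sequence
$$0\to G_n\to G_{n-1}\to \cdots \to G_0\to M\to 0$$
with each $G_i\in \mathcal{GP}(R)$. In our extriangulated notation this says exactly that $M\in \widehat{\mathcal{GP}(R)_n}$, so $\widehat{\mathcal{GP}(R)_n}=R\text{-}\mathrm{Mod}$.

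With both hypotheses verified, Theorem \ref{thm6} applied to $(\X,\W)=(\mathcal{GP}(R),\mathcal{P}(R))$ yields that
$$\mathcal{P}=\bigl((\mathcal{P}(R),\,R\text{-}\mathrm{Mod}),\,(\mathcal{GP}(R),\,\widehat{\mathcal{P}(R)_n})\bigr)$$
is an admissible model structure on $R\text{-}\mathrm{Mod}$, and the final observation is that $\widehat{\mathcal{P}(R)_n}=\widehat{\mathcal{P}(R)}$ in this setting (which is also part of the equivalence in Theorem \ref{thm6}), giving the stated form. The only real content beyond citing Theorem \ref{thm6} is the finiteness of Gorenstein projective dimension over a Gorenstein ring; everything else is bookkeeping. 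I do not anticipate a genuine obstacle, since this classical finiteness is precisely what the abstract framework was designed to exploit.
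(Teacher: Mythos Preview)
Your proposal is correct and follows essentially the same path as the paper: both arguments rest on Example \ref{ex}(1) (that $(\mathcal{GP}(R),\mathcal{P}(R))$ is a strong left Frobenius pair) together with the classical fact that over a Gorenstein ring every module has bounded Gorenstein projective dimension. The only cosmetic difference is that the paper invokes Proposition \ref{prop8} (obtaining the model structure on $\mathrm{Thick}(\mathcal{GP}(R))$, which equals $R\text{-}\mathrm{Mod}$ by the Gorenstein hypothesis), whereas you invoke Theorem \ref{thm6}; since Theorem \ref{thm6} is itself a refinement of Proposition \ref{prop8}, the two routes are effectively interchangeable.
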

\begin{proof}
It follows from that Example \ref{ex} and Proposition \ref{prop8}.
\end{proof}

Let $n$ be a non-negative integer. In the following, we denote by $G_{C}\text{-}Proj(S)_{\leq n}$ (resp.,~$\mathcal{P}_{C}(S)_{\leq n})$ the class of modules with $C$-Gorenstein projective (resp., $C$-projective) dimension at most $n$
\begin{cor}
Let $C={_{S}C}_{R}$ be a faithfully semidualizing module. Then the following statements are equivalent:

(1) $G_{C}\text{-}Proj(S)_{\leq n}=\mathcal{B}_{C}(S)$.

(2) $\mathcal{P}=((\mathcal{P}_{C}(S),\mathcal{B}_{C}(S)),(G_{C}\text{-}Proj(S),\mathcal{P}_{C}(S)_{\leq n})$ is an admissible model structure on $\mathcal{B}_{C}(S)$.
\end{cor}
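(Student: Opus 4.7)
The plan is to reduce the corollary to a direct application of Theorem~\ref{thm6}, taking the extriangulated category to be the Bass class $\mathcal{B}_{C}(S)$ (which, as noted in the remark after the definition of $\mathcal{B}_{C}(S)$, is exact with enough projectives and injectives) and the strong left Frobenius pair to be $(G_{C}\text{-}Proj(S), \mathcal{P}_{C}(S))$, whose existence is precisely Corollary~\ref{cor1}(1). With this setup, the corollary amounts to identifying Theorem~\ref{thm6}'s abstract classes $\widehat{\X_{n}}$ and $\widehat{\W_{n}}$ with the concrete dimension classes $G_{C}\text{-}Proj(S)_{\leq n}$ and $\mathcal{P}_{C}(S)_{\leq n}$.

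First, I would unpack the exact/extriangulated structure on $\mathcal{B}_{C}(S)$: since $\mathcal{B}_{C}(S)$ is an exact category whose conflations are the short exact sequences all of whose terms lie in $\mathcal{B}_{C}(S)$, an $\E$-triangle sequence in the sense of Section~2 is exactly a long exact sequence in $S\text{-Mod}$ whose kernels and cokernels are each in $\mathcal{B}_{C}(S)$. Using that $G_{C}\text{-}Proj(S) \subseteq \mathcal{B}_{C}(S)$ (by \cite{GD}, as cited before Corollary~\ref{cor1}) and that $\mathcal{B}_{C}(S)$ is closed under kernels of surjections between its objects, a standard syzygy argument shows that for any $M \in \mathcal{B}_{C}(S)$, the condition $M \in \widehat{(G_{C}\text{-}Proj(S))_{n}}$ is equivalent to the existence of an exact sequence $0 \to X_{n} \to \cdots \to X_{0} \to M \to 0$ with each $X_{i}$ $C$-Gorenstein projective, i.e., to $M \in G_{C}\text{-}Proj(S)_{\leq n}$. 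The same argument applied to $\mathcal{P}_{C}(S)$ (which trivially sits in $\mathcal{B}_{C}(S)$) yields $\widehat{\mathcal{P}_{C}(S)_{n}} = \mathcal{P}_{C}(S)_{\leq n}$.

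Once these identifications are in hand, the corollary is just Theorem~\ref{thm6} transcribed: condition (1), $G_{C}\text{-}Proj(S)_{\leq n} = \mathcal{B}_{C}(S)$, is the statement $\widehat{\X_{n}} = \C$ of Theorem~\ref{thm6}(1) in our setting, while condition (2) is the admissible model structure $((\W, \C), (\X, \widehat{\W_{n}}))$ of Theorem~\ref{thm6}(2). Thus the equivalence follows immediately from Theorem~\ref{thm6}.

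The main obstacle I expect is the bookkeeping in the first step, namely making sure that a $G_{C}\text{-}Proj(S)$-resolution computed in $S\text{-Mod}$ actually stays inside $\mathcal{B}_{C}(S)$ at every syzygy, so that the classical notion of $C$-Gorenstein projective dimension agrees with the extriangulated resolution dimension taken inside $\mathcal{B}_{C}(S)$. This reduces to the two-out-of-three property of $\mathcal{B}_{C}(S)$ in exact sequences, which is well-known and used implicitly in \cite{HW, GD}; no further homological work is required.
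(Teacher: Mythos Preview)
Your proposal is correct and is exactly the paper's approach: the paper's proof is the single line ``It follows from Corollary~\ref{cor1} and Theorem~\ref{thm6},'' and your write-up simply unpacks why those two results suffice, including the routine identification of the abstract classes $\widehat{\X_n}$ and $\widehat{\W_n}$ with the concrete dimension classes. There is nothing to add.
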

\begin{proof}
It follows from Corollary \ref{cor1} and Theorem \ref{thm6}.
\end{proof}

Let  $(\mathcal{C}, \mathbb{E}, \mathfrak{s})$ be an extriangulated category and $\xi$ a proper class of $\mathbb{E}$-triangles. By \cite{HZZ}, an object $P\in\mathcal{C}$  is called {\it $\xi$-projective}  if for any $\mathbb{E}$-triangle $$\xymatrix{A\ar[r]^x& B\ar[r]^y& C \ar@{-->}[r]^{\delta}& }$$ in $\xi$, the induced sequence of abelian groups $\xymatrix@C=0.6cm{0\ar[r]& \mathcal{C}(P,A)\ar[r]& \mathcal{C}(P,B)\ar[r]&\mathcal{C}(P,C)\ar[r]& 0}$ is exact. We denote $\mathcal{P(\xi)}$ the class of $\xi$-projective objects of $\mathcal{C}$. Recall from \cite{HZZ} that an object $M\in\mathcal{C}$ is called \emph{$\xi$-$\mathcal{G}$projective} if there exists a diagram
$$\xymatrix@C=2em{\mathbf{P}:\cdots\ar[r]&P_1\ar[r]^{d_1}&P_0\ar[r]^{d_0}&P_{-1}\ar[r]&\cdots}$$ in $\mathcal{C}$ satisfying that: (1) $P_n$ is $\xi$-projective for each integer $n$; (2) there is a $\mathcal{C}(-,\mathcal{P(\xi)})$-exact $\mathbb{E}$-triangle $\xymatrix@C=2em{K_{n+1}\ar[r]^{g_n}&X_n\ar[r]^{f_n}&K_n\ar@{-->}[r]^{\delta_n}&}$ in $\xi$ and $d_n=g_{n-1}f_n$ for each integer $n$ such that $M\cong K_n$ for some $n\in{\mathbb{Z}}$. We denote  by $\mathcal{GP}(\xi)$ the class of $\xi$-$\mathcal{G}$projective objects in $\mathcal{C}$.
Specializing Theorem \ref{thm6} to the case $\mathcal{X}=\mathcal{GP}(\xi)$, we have the following result in \cite{HZZ}.

\begin{cor}\cite[Theorem 5.9]{HZZ}\label{cor:th} Let  $(\mathcal{C}, \mathbb{E}, \mathfrak{s})$ be an extriangulated category satisfying Condition \emph{(WIC)} {\rm(}see \cite[Condition 5.8]{NP}{\rm)}. Assume that $\xi$ is a proper class in $\mathcal{C}$.
Set $\mathbb{E}_\xi:=\mathbb{E}|_\xi$, that is, $$\mathbb{E}_\xi(C, A)=\{\delta\in\mathbb{E}(C, A)~|~\delta~ \textrm{is realized as an $\mathbb{E}$-triangle}\xymatrix{A\ar[r]^x&B\ar[r]^y&C\ar@{-->}[r]^{\delta}&}~\textrm{in}~\xi\}$$ for any $A, C\in\mathcal{C}$, and $\mathfrak{s}_\xi:=\mathfrak{s}|_{\mathbb{E}_\xi}$.
If $n$ is a non-negative integer, then the following conditions are equivalent:

\emph{(1)} ${\rm sup}\{\xi$-$\mathcal{G}{\rm pd}A|A\in\mathcal{C}\}\leqslant n$.

\emph{(2)} $\mathcal{P}=((\mathcal{P}(\xi),\mathcal{C}), (\mathcal{GP}(\xi), \mathcal{P}^{\leqslant n}(\xi)))$ is an admissible model structure on $(\mathcal{C}, \mathbb{E}_\xi, \mathfrak{s}_\xi)$, where $\mathcal{P}^{\leqslant n}(\xi)=\{A\in\mathcal{C}|\xi$-${\rm pd}A\leqslant n\}$.
\end{cor}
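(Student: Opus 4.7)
The plan is to deduce this corollary by specializing Theorem \ref{thm6} to the extriangulated category $(\mathcal{C},\mathbb{E}_\xi,\mathfrak{s}_\xi)$ with the choice $\mathcal{X}=\mathcal{GP}(\xi)$ and $\mathcal{W}=\mathcal{P}(\xi)$. The first task is to check that the hypotheses of Theorem \ref{thm6} are met: namely, that $(\mathcal{C},\mathbb{E}_\xi,\mathfrak{s}_\xi)$ is an extriangulated category with enough projectives and injectives, and that $(\mathcal{GP}(\xi),\mathcal{P}(\xi))$ is a \emph{strong} left Frobenius pair inside it. The extriangulated structure of $(\mathcal{C},\mathbb{E}_\xi,\mathfrak{s}_\xi)$ is a direct consequence of $\xi$ being a proper class, and the existence of enough $\xi$-projectives and $\xi$-injectives is built into the setup used in \cite{HZZ}, so this step is routine to cite.

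Next I would verify the Frobenius pair axioms. I would show: (a) $\mathcal{GP}(\xi)$ is closed under extensions and under CoCone of deflations in $\mathbb{E}_\xi$; (b) $\mathcal{P}(\xi)\subseteq\mathcal{GP}(\xi)$; (c) every $M\in\mathcal{GP}(\xi)$ fits into an $\mathbb{E}_\xi$-triangle $M\to P\to M'$ with $P\in\mathcal{P}(\xi)$ and $M'\in\mathcal{GP}(\xi)$ (and dually an $\mathbb{E}_\xi$-triangle $M''\to P\to M$); and (d) $\mathcal{P}(\xi)\subseteq\mathcal{GP}(\xi)^{\perp}\cap{}^{\perp}\mathcal{GP}(\xi)$ with respect to $\mathbb{E}_\xi$-extensions. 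Conditions (b)--(d) follow directly from the very definition of $\xi$-$\mathcal{G}$projective objects recalled just before the corollary (unwind the $\mathcal{C}(-,\mathcal{P}(\xi))$-exact resolution $\mathbf{P}$), while (a) is the $\xi$-analogue of the Gorenstein projective closure properties; both are proved in the relevant part of \cite{HZZ}, and I would simply cite them. This gives us the strong left Frobenius pair $(\mathcal{GP}(\xi),\mathcal{P}(\xi))$ in $(\mathcal{C},\mathbb{E}_\xi,\mathfrak{s}_\xi)$.

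Now I would translate the two conditions of Theorem \ref{thm6} into the language of the corollary. By definition of $\xi$-$\mathcal{G}\mathrm{pd}$, one has $A\in\widehat{\mathcal{GP}(\xi)_n}$ if and only if there is an $\mathbb{E}_\xi$-triangle sequence $G_n\to G_{n-1}\to\cdots\to G_0\to A$ with each $G_i\in\mathcal{GP}(\xi)$, which is precisely $\xi$-$\mathcal{G}\mathrm{pd}A\leq n$. Hence $\widehat{\mathcal{GP}(\xi)_n}=\mathcal{C}$ is equivalent to $\sup\{\xi\text{-}\mathcal{G}\mathrm{pd}A\mid A\in\mathcal{C}\}\leq n$. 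Likewise, unwinding the definition of $\widehat{\mathcal{P}(\xi)_n}$ gives exactly $\mathcal{P}^{\leq n}(\xi)$. With these translations in hand, Theorem \ref{thm6} immediately yields the equivalence: (1) of the corollary matches (1) of Theorem \ref{thm6}, and the Hovey twin cotorsion pair $((\mathcal{P}(\xi),\mathcal{C}),(\mathcal{GP}(\xi),\mathcal{P}^{\leq n}(\xi)))$ of (2) of the corollary is precisely $((\mathcal{W},\mathcal{C}),(\mathcal{X},\widehat{\mathcal{W}_n}))$ of (2) of Theorem \ref{thm6}.

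The main obstacle I anticipate is not any deep step but bookkeeping: one must be careful that all objects, $\mathbb{E}$-triangles, and $\perp$-orthogonals referenced in Definition \ref{df:Frobenius-pair}, Theorem \ref{thm2}, and Theorem \ref{thm6} are interpreted with respect to $\mathbb{E}_\xi$ and not the ambient $\mathbb{E}$. Condition (WIC) is used in \cite{HZZ} to guarantee the closure properties of $\mathcal{GP}(\xi)$ under CoCone of deflations and under direct summands; invoking it here lets the verification of the Frobenius pair axioms go through cleanly, after which the corollary follows as a direct specialization.
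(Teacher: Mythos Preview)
Your proposal is correct and follows essentially the same approach as the paper: the paper's proof simply asserts that $(\mathcal{GP}(\xi),\mathcal{P}(\xi))$ is a strong left Frobenius pair (in the extriangulated category $(\mathcal{C},\mathbb{E}_\xi,\mathfrak{s}_\xi)$) and then invokes Theorem~\ref{thm6}. You have merely spelled out in more detail the verifications and dictionary translations that the paper leaves as ``easy to check.''
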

\begin{proof}
It is easy to check that $(\mathcal{GP}(\xi),\mathcal{P}(\xi))$ is a strong left Frobenius pair. Thus the corollary follows from  Theorem \ref{thm6}.
\end{proof}

\renewcommand\refname{\bf References}

\textbf{Yajun Ma}\\
Department of Mathematics, Nanjing University, Nanjing 210093, China.\\
E-mail: \textsf{13919042158@163.com}\\[1mm]
\textbf{Haiyu Liu}\\
School of Mathematics and Physics, Jiangsu University of Technology,
 Changzhou, Jiangsu 213001, China.\\
E-mail: \textsf{haiyuliumath@126.com}\\[1mm]
\textbf{Yuxian Geng}\\
School of Mathematics and Physics, Jiangsu University of Technology,
 Changzhou, Jiangsu 213001, China.\\
E-mail: \textsf{yxgeng@jsut.edu.cn}\\[1mm]

\begin{thebibliography}{99}

%\bibitem{AI} T. Aihara and O. Iyama, Silting mutation in triangulated categories, J. Lond. Math. Soc. 85 (2012) 633-668.

\bibitem{AS} J. Asadollahi and S. Salarian, Gorenstein objects in triangulated categories, J. Algebra 281 (2004) 264-286.

\bibitem{AB} M. Auslander and R. O. Buchweitz, The homological theory of maximal Cohen-Macaulay approximations, Mem. Soc. Math. France 38 (1989) 5-37.
%\bibitem{AM} L. L. Avramov and A. Martsinkovsky, Absolute, relative, and Tate cohomology of modules of finite Gorenstein dimension, Proc. London Math. Soc. 85(3) (2002) 393-440.
%\bibitem{AR} M. Auslander and I. Reiten, Applications of contravariantly finite subcategories, Adv. Math. 86 (1991) 111-152.

\bibitem{BMPS} V. Becerril, O. Mendoza, M. A. P$\mathrm{\acute{e}}$rez and V. Santiago, Frobenius pairs in abelian categories:
Correspondences with cotorsions pairs, exact model categories, and Auslander-Buchweitz contexts, J. Homotopy Relat. Struct. 14 (2019) 1-50.

%\bibitem{Bel1} A. Beligiannis, Relative homological algebra and purity in  triangulated categories, J. Algebra 227 (1) (2000) 268-361.
%\bibitem{Bel2} A. Beligiannis, The homological theory of contravariantly finite subcategories: Auslander-Buchweitz contexts, Gorenstein categories and (co-)stablization, Comm. Algebra 28 (2000) 4547-4596.
    \bibitem{B} T. B\"{u}hler, Exact categories, Expo. Math. 28 (2010) 1-69.
%\bibitem{CCLP} O. Celikbas, L. W. Christensen, L. Liang and G. Piepmeyer, Stable homology over associative rings, Trans. Amer. Math. Soc. 369 (2017) 8061-8086.
%\bibitem{CZZ} W. Chang, P. Y. Zhou and B. Zhu, Cluster subalgebras and cotorsion pairs in Frobenius extriangulated categories, Algebr. Represent. Theory 22 (2019) 1051-1081.
\bibitem{CLY} W. J. Chen, Z. K. Liu and X. Y. Yang, A new method to construct model structures from a cotorsion pair, Comm. Algebra 47 (2017) 4420-4431.

\bibitem{DLWW} Z. X. Di, Z. K. Liu, J. P. Wang and J. Q. Wei, An Auslander-Buchweitz approximation approach to (pre)silting subcategories in triangulated categories, J. Algebra 525 (2019) 42-63.

\bibitem{EJ2} E. E. Enochs and O. M. G. Jenda, Relative
Homological Algebra,  Walter de Gruyter, Berlin, New York, 2000.
%\bibitem{E} H. Enomoto, Classifying exact categories via Wakamatsu tilting, J. Algebra 485 (2017) 1-44.
\bibitem{GD} Y. X. Geng and N. Q. Ding, $\mathcal{W}$-Gorenstein modules, J. Algebra 325 (2011) 132-146.

\bibitem{Gillespie} J. Gillespie, Model structures on exact categories, J. Pure Appl. Algebra 215 (2011) 2892-2902.

%\bibitem{GT12} R. G$\ddot{\mathrm{o}}$bel, J. Trlifaj, Approximations and Endomorphism Algebras of Modules, de Gruyter Expositions in Mathematics 41, 2nd revised and extended edition, Berlin-Boston 2012.

\bibitem{HCc} M. Hovey, {Cotorsion pairs, model category structures, and representation theory},
Math. Z. 241 (2002) 553-592.
\bibitem{HJs} H. Holm and P. J$\o$rgensen, \emph{Semi-dualizing modules and
related Gorenstein homological dimensions}, J. Pure Appl. Algebra
205(2) (2006) 423-445.

\bibitem{Holm} H. Holm, Gorenstein homological dimensions, J. Pure Appl. Algebra 189 (2004) 167-193.

\bibitem{HW} H. Holm and D. White, Foxby equivalence over associative rings, J. Math. Kyoto Univ. 47 (2007) 781-808.

\bibitem{HZZ} J. S. Hu, D. D. Zhang and P. Y. Zhou, Proper classes and Gorensteinness in extriangulated categories, J. Algebra 551 (2020) 23-60.

%\bibitem{HC} M. Hovey, {Cotorsion pairs and model categories},
%in Interactions between homotopy theory and algebra, Contemp. Math.
%436. Amer. Math. Soc., Providence, RI, 2007, 277-296.

\bibitem{IY} O. Iyama and D. Yang, Silting reduction and Calabi-Yau reduction of triangulated categories, Trans. Amer. Math. Soc. 370 (2018) 7861-7898.

\bibitem{LN} Y. Liu and H. Nakaoka, Hearts of twin cotorsion pairs on extriangulated categories, J. Algebra 528 (2019) 96-149.

\bibitem{MDZH} Y. J. Ma, N. Q. Ding, Y. F. Zhang and J. S. Hu, On the classification of silting subcategories in the stable category of Frobenius extriangulated categories, arXiv: 2012. 03779v1.
\bibitem{MSSS1} O. Mendoza Hern$\acute{\mathrm{a}}$ndez, E. S$\acute{\mathrm{a}}$enz, V. Santiago Vargas and M. Souto Salorio. Auslander-Buchweitz approximation theory for triangulated categories, Appl. Categ. Structures 21 (2013a) 119-139.

\bibitem{MSSS2}O. Mendoza Hern$\acute{\mathrm{a}}$ndez, E. S$\acute{\mathrm{a}}$enz, V. Santiago Vargas and M. Souto Salorio. Auslander-Buchweitz context and co-t-structures, Appl. Categ. Structures 21 (2013b) 417-440.


\bibitem{NP}  H. Nakaoka and Y. Palu,  Extriangulated categories, Hovey twin cotorsion pairs and model structures, Cahiers de Topologie et Geometrie Differentielle Categoriques, Volume LX-2 (2019) 117-193.
\bibitem{SSW} S. Sather-Wagstaff, T. Sharif and D. White, Stability of Gorenstein categories, J. Lond. Math. Soc. 77 (2008) 481-502.
%\bibitem{RL1} W. Ren and Z. K. Liu, Gorenstein homological dimensions for triangulated categories, J. Algebra 410 (2014) 258-276.
%\bibitem{RL2} W. Ren and Z. K. Liu, Balance of Tate cohomology  in triangulated categories, Appl. Categ.  Struct. 23(6) (2015) 819-828.
%\bibitem{RL3} W. Ren, R. Y. Zhao and Z. K. Liu, Cohomology theoreies in triangulated categories, Acta Math. Sin. Engl. Ser. 32(11) (2016) 1377-1390.
%\bibitem{Ver} J. L. Verdier, Cat\'{e}gories d\'{e}riv\'{e}es: \'{e}tat 0, in: SGA $4\frac{1}{2}$, in: Lecture Notes in Math., vol. 569, Springer-Verlag, Berlin, 1977, pp. 262-311.
\bibitem{Yang}   X. Y. Yang, Model structures on triangulated categories, Glasg. Math. J. 57(2) (2015) 263-284.
%\bibitem{Pa} D. Pauksztello, Compact corigid objects in triangulated categories and co-t-structures, Cent. Eur. J. Math. 6 (2008) 25-42.

%\bibitem{Sa} L. Salce, Cotorsion theories for abelian groups, Symposia Math. (1979) 11-32.
 \bibitem{ZZ} P. Y. Zhou and B. Zhu,  Triangulated quotient categories revisited, J. Algebra 502 (2018) 196-232.

\bibitem{ZZ1} B. Zhu and X. Zhuang,  Tilting subcategories in extriangulated categories, Front. Math. China 15(1) (2020) 225-253.


\end{thebibliography}
\end{document}